\documentclass[a4paper,11pt,reqno]{amsart}
\usepackage{amsmath,amssymb}
\usepackage[toc]{appendix}
\usepackage{enumerate}
\usepackage{float}
\usepackage{ifthen}
\usepackage{graphicx}
\usepackage{calc}
\usepackage{tikz}
\usepackage{tikz-qtree}
\usepackage[hypcap]{caption}
\usetikzlibrary{arrows}
\tikzstyle{block}=[draw opacity=0.7,line width=1.4cm]
\nonstopmode\numberwithin{equation}{section}
\pagestyle{plain}

\setlength{\textwidth}{16cm}
\setlength{\textheight}{24cm}
\setlength{\oddsidemargin}{0cm}
\setlength{\topmargin}{-0.5cm}
\setlength{\evensidemargin}{0cm} \setlength{\footskip}{40pt}
\usepackage{fancyhdr}
 
\pagestyle{fancy}
\fancyhf{}
\rhead{Iteratively Regularized Landweber Method}
\lhead{G. Mittal and A. K. Giri}
\cfoot{ \thepage}

\newtheorem{definition}{Definition}[section]
\newtheorem{theorem}{Theorem}[section]

\newtheorem{lemma}{Lemma}[section]
\newtheorem{example}{Example}[section]
\newtheorem{remark}{Remark}[section]

\newtheorem{assum}{Assumption}[section]
\allowdisplaybreaks
{} 
{} 
{} 

 \theoremstyle{plain}

\begin{document}
\title{ Iteratively regularized landweber iteration method: Convergence analysis via  H\"older Stability}
\maketitle
\begin{center}
\center{${\text{Gaurav Mittal}}$ and   $\text{Ankik Kumar Giri}$}
\medskip
{\footnotesize
 \center{ Department of Mathematics, Indian Institute of Technology Roorkee, Roorkee,  India, 247667.}
} \\
\email {gmittal@ma.iitr.ac.in,  {ankik.giri@ma.iitr.ac.in}}
\bigskip
\begin{abstract} In this paper, the local convergence of Iteratively regularized Landweber iteration method is investigated  for solving non-linear  inverse problems in  Banach spaces. Our  analysis mainly relies   on the assumption that the inverse mapping satisfies the H\"older stability estimate locally. We consider both  noisy as well as non-noisy data in our analysis. Under the a-priori choice of stopping index for noisy data, we show that the iterates remain in a certain ball around exact solution and obtain the convergence rates.
 The convergence of the Iteratively  regularized Landweber iterates to the exact solution is shown under certain assumptions in the case of non-noisy data and as a by-product, under  different conditions, two different convergence rates are obtained. 

\hspace{-5mm}\subjclass {}{\textbf{AMS Subject Classifications}: $65$J$15$,  $47$A$52$, $47$J$06$}\\
\hspace{-5mm}\keywords{\textbf{Keywords:} Iterative Regularization, Nonlinear ill-posed problems, H\"older stability estimates}
 
\end{abstract}

\end{center}

\section{Introduction}

Let $F: D(F) \subset U \to V: F(u) = v$ be a non-linear forward operator between the Banach spaces $U$ and $V$. The classical meaning of an inverse problem is the determination of $u\in U$, provided $v$ or some approximation of $v$ is given. For further details on inverse problems, see $[16]$ for   Hilbert spaces settings, and $[33]$ for Banach space settings. In general, due to the lack of continuous dependence on the data, almost all the inverse problems are ill-posed in nature. Thus,  regularization methods are needed to find the stable approximate solutions  of the ill-posed  inverse problems.    Variational regularization methods are well known regularization methods for finding the stable approximate solutions and are well studied, see, for instance,  $[16, 26]$.  Nevertheless, iterative methods are often an appealing alternative to variational methods (specifically for large-scale problems). Among all the iterative methods, Landweber iteration method is one of the well known classical methods. For the convergence results of Landweber iteration and its modifications in Hilbert spaces, an  extensive research has been done in $[21, 23]$. In the case of monotone  operators, there is an important role of duality mappings in iterative methods $($see $[5, 8, 9, 36])$. Using the duality mapping, non-linear generalization of the Landweber method  is given in $[10]$ for  Banach spaces. Scherzer, in $[25]$, gave the modification of Landweber iteration method and coined it as  iteratively regularized Landweber iteration method.  This method is highly motivated  from the iteratively regularized Gauss-Newton method introduced by Bakushinskii in $[1]$.  In our study,   the data space $V$ can be any arbitrary Banach space but the model space $U$ needs to be  uniformly convex and smooth (see next section for their formal definitions). In the theory of Banach spaces, Bregman distances play an important role because of their rich geometrical properties and are more convenient to employ rather than Ljapunov functionals to prove the convergence of regularization schemes $[31]$.  And hence, it is more appropriate to derive the convergence rates with the help of Bregman distances. 

Conceptually, convergence rates can be derived with two different approaches for non-linear  problems. First one is on the basis of source and non-linearity conditions, see, for instance, $[16,  26, 27, 33]$ for variational regularization, and $[2, 3, 4, 30, 33]$ for iterative regularization. The second approach relies on the stability estimates which has been derived in $[18]$ for Tikhonov's regularization  method and in $[24]$ for iterative regularization (Landweber iteration method)  in Banach spaces. The results regarding the rates of  convergence using  H\"older stability estimates  and logarithmic stability estimates can also be found in $[12, 15]$ and  $[34, 35]$ respectively.

 In our analysis, we consider the iteratively regularized Landweber iteration scheme which is taken from  $[33]$.  The motivation for this paper comes from $[24]$ in which the convergence rates for Landweber iteration method have been obtained via H\"older stability estimates, however, only non-noisy data is  considered there.  The prime motive  of this work is to study the  convergence of the iterates of Iteratively regularized Landweber iteration method $(2.1)$-$(2.2)$ provided the inverse mapping satisfies  the H\"older stability estimate $(3.2)$  and hence find the convergence rates. Since non-noisy data is taken in $[24]$ for Landweber iteration method,  we want to emphasize that from our results, one can also deduce the convergence rates in the presence of noisy data  for Landweber iteration method.  
   Novelty of this work  is to determine the convergence rates for both the noisy as well as non-noisy data without using the classical approach  based on source conditions as well as the contemporary smoothness concept  known as variational inequalities. 

The plan of this paper is the following:  All the basic results and definitions required in our framework are recapitulated in Section $2$. In the third section, the main result     on the convergence and its rates is stated and proved  in  Theorem $3.1$ along with the necessary  assumptions. In addition, a convergence rate is also established in  Theorem $3.2$  for the special case of  H\"older stability estimates. In Section $4$, we give an  example where our results on the convergence can be applied. At the end, a few conclusions are made.  

\section{Preliminaries}

\begin{definition}{Duality map}: Let $U$ be a Banach space and $U^*$ be its dual space. The  mapping $J_p : U \to 2^{U^*}$ of the convex functional $u \to \frac{1}{p}\|u\|^p$ defined by $$J_p(u) = \{ u^* \in U^*\ | \ \langle u, u^*\rangle = \|u\|\|u^*\|, \|u^*\| = \|u\|^{p-1}\}$$ is known as the duality mapping of $U$ with the gauge function $t \to t^{p-1}$,  where $p>1$. \end{definition}
\begin{example}
Let $a > 1$. Then, for $U = \mathcal{L}^a(\mathbb{R}^n)$ $($the space of measurable functions for which the $a$-th power of the absolute value is Lebesgue integrable$)$, we have $$J_p: \mathcal{L}^a(\mathbb{R}^n) \to \mathcal{L}^b(\mathbb{R}^n)\quad \text{defined by} \quad u(x)\mapsto \|u\|_U^{p-a}|u(x)|^{a-2}u(x),$$ where $a$ and $b$ are conjugate indices. 

\end{example}
In general, $J_p$ is a set valued mapping but we need it to be single-valued in the further analysis. In order to fulfil this condition, we introduce the notions of uniform convexity and uniform smoothness of Banach spaces.
\begin{definition}{Convexity modulus of} $U$: It is a function $\delta: [0,2] \to [0,1]$ defined by

 $$\delta_U(\epsilon) =  \inf \bigg\{\frac{1}{2}\bigg(2-\|u_1+u_2\|\bigg): \ u_1, u_2 \in S, \|u_1-u_2\| \geq \epsilon\bigg\},$$ where $S$ is the boundary of   unit sphere in the Banach space $U$. 
 Further,  if $\delta_U(\epsilon) > 0$ for any $\epsilon \in (0, 2]$, then $U$ is uniformly convex.  

\end{definition}
\begin{definition} {Smoothness modulus of} $U$: It is a function $\rho: [0,\infty) \to [0,\infty)$ defined by

 $$\rho_U(\tau) =  \sup \bigg\{\frac{1}{2}\bigg(\|u_1+\tau u_2\|+ \|u_1-\tau u_2\| - 2\bigg): \ u_1, u_2 \in S\bigg\}, $$ where $S$ is the boundary of   unit sphere in the Banach space $U$. Further,   if $\lim_{\tau \to 0}{\dfrac{\rho_U(\tau)}{\tau}} = 0$, then $U$ is uniformly smooth.  
\end{definition}

\begin{definition} A Banach space $U$ is \begin{enumerate}
\item $p$ convex or convex of power type $p$ if  $\delta_U(\epsilon) \geq Y \epsilon^p$, where $Y>0$ is a constant. 
\item $q$ smooth if   $\rho_U(\tau) \leq Z \tau^q$, where $Z>0$ is a constant. 
\end{enumerate}
\end{definition}
\begin{example}
 The Banach space $U = \mathcal{L}^p(\Sigma)$, where  $p > 1$  and  $\Sigma \subset \mathbb{R}^n$ be an open domain, is uniformly convex as well as uniformly smooth and $$\delta_U(\epsilon) = \begin{cases} \epsilon^2, \qquad 1 < p < 2\\ \epsilon^p, \qquad 2 \leq p < \infty\end{cases} \quad \text{and}\quad \ \  \rho_U(\tau) = \begin{cases} \tau^p, \qquad 1 < p < 2\\ \tau^2, \qquad 2 \leq p < \infty.\end{cases}$$
\end{example}

Next, we  recall the definition of Bregman distance, see $[33, \ \text{Definition}\ 2.56]$. \begin{definition}{Bregman distance}: Let $U$ be a uniformly smooth Banach space and $J_p$ is the duality mapping from $U$ to $U^*$ with the gauge function $t \to t^{p-1}$. Then the functional $$\Delta_p(u_1, u_2) = \frac{1}{p}\|u_1\|^p-\frac{1}{p}\|u_2\|^p-\langle J_p(u_2), u_1-u_2\rangle, \quad  u_1 \in U,$$ is the  Bregman distance of the convex functional $u \to \frac{1}{p}\|u\|^p$ at $u_2 \in U$. \end{definition}The following identity in Lemma $2.1$ is known as three point identity for Bregman distances, for proof see $[33, \ \text{Lemma}\ 2.62]$.
\begin{lemma}
For $u_1, u_2$ and $u_3$ in the Banach space $U$, we have $$\Delta_p(u_1, u_2)= \Delta_p(u_1, u_3)+\Delta_p(u_3, u_2)+\langle J_p(u_3)-J_p(u_2), u_1-u_3\rangle.$$
\end{lemma}
\subsection{Iteratively Regularized Landweber Iteration Method}
In Banach spaces, we consider the following   iteratively regularized Landweber iteration   method given in $[33]$:  \begin{equation} \label{eq1} \begin{split} J_p(u_{k+1}^{\delta} - u_0) = (1-\beta_k) J_p(u_k^{\delta}  - u_0) - \mu F'(u_k^{\delta})^*j_p(F(u_k^{\delta})-v^{\delta}),\end{split}\end{equation} \begin{equation} \label{eq1} \begin{split} \qquad u_{k+1}^{\delta}  = u_0 +  J_q^*(J_p(u_{k+1}^{\delta}-u_0)), \  \text{where}  \quad 0 < \beta_k \leq \beta_{\max} < 1, \ k = 0, 1, 2, \cdots\end{split} \end{equation} Here  $J_p: U \to U^*$, $J_q^*: U^* \to U$, $j_p: V \to V^*$ are duality mappings, $\mu$ is a  positive constant, $u_0= u_0^{\delta}$ is the initial guess of the solution,  $v^{\delta} \in V$ be such that $\|v^{\delta}-v\|\leq \delta$ and $p, q > 1$ are conjugate indices. This iterative scheme is a Gradient type method  resulting from the application of gradient descent  to the misfit $\|F(u)-v\|^p$.
\begin{remark}
For  Hilbert space settings, convergence of  Iteratively regularized Landweber iteration scheme $(2.1)$-$(2.2)$ has been shown in $[25]$  for the noisy data and  the appropriate choice of $\beta_i$'s in $[0, 1]$.  Also convergence rates have been obtained in $[15]$  provided the exact solution satisfies the  source conditions $[16]$. In  $[33, \text{Theorem}\ 7.5]$,  convergence rates have been  obtained for the method $(2.1)$-$(2.2)$ in Banach spaces   by incorporating the following variational inequalities $$|\langle J_p(u^{\dagger}-u_0), u-u^{\dagger}\rangle| \leq \beta \Delta_p^{u_0}(u^{\dagger}, u)^{\frac{1-\nu}{2}}\|F'(u^{\dagger})(u-u^{\dagger})\|^{\nu}, $$ and the non-linearity estimate $$\|(F'(u^{\dagger}+v)-F'(u^{\dagger}))v\| \leq K \|F'(u^{\dagger})v\|^{c_1}\ \Delta_p^{u_0}(u^{\dagger}, v+u^{\dagger})^{c_2},$$ where 
$v \in U$ and  $u, u^{\dagger}+v$ are in some ball of positive radius around the exact solution $u^{\dagger}$, $\nu \in (0, 1]$,  $\beta >0, K > 0$,  $\Delta_p^{u_0}(u^{\dagger}, u ) = \Delta_p(u^{\dagger}-u_0, u-u_0)$ and $c_1$, $c_2$ are properly chosen constants. Here, we study both the convergence and convergence rates by incorporating an alternative condition, namely H\"older type stability $(3.2)$ replacing the variational inequalities and the non-linearity estimate. 
\end{remark}
\begin{remark}
 For solving $F(u)=v$, suppose $v^{\delta}$ is known to us such that $\|v^{\delta}-v\|\leq\delta$ for some $\delta >0$. Then, consider the following iteration scheme:
\begin{equation*} \label{eq1} \begin{split} J_p(u_{k+1}^{\delta} ) =  J_p(u_k^{\delta}) - \mu F'(u_k^{\delta})^*j_p(F(u_k^{\delta})-v^{\delta}) + \beta_k J_p(u_0-u_k^{\delta}),\end{split}\end{equation*} \begin{equation*} \label{eq1} \begin{split} \qquad u_{k+1}^{\delta}  =  J_q^*(J_p(u_{k+1}^{\delta})), \quad  \text{where}  \quad 0 < \beta_k \leq \beta_{\max} < \frac{1}{2}.\end{split} \end{equation*}This is   another version of  Iteratively regularized Landweber Iteration method. In Hilbert spaces,  this method reduces to  the method discussed in $[4]$ with $\mu = 1$.\end{remark}
\begin{remark}
 If $\beta_ k = 0$ for each $k$ in $(2.1)$,  then the resulting method is nothing but the Landweber iteration method  discussed in $[24]$ with  $u_0 = \delta= 0$. 
\end{remark}
Now,  we recall the properties of    duality mappings through which one get to know about the conditions under which the duality mapping $J_p$ is single valued, invertible etc.,  see $[10, 17]$. 
\begin{theorem}
For $p > 1$, the following holds:
\begin{enumerate}
\item For every $u \in U$, the set $J_p(u)$ is non empty.

\item The set $J_p(u)$ is single valued for each $u\in U$ provided the Banach space $U$ is uniformly smooth. 

\item If a  Banach space is uniformly convex    and  uniformly smooth,  then $J_p(u)$ is one-one and onto and its inverse is $J_p^{-1} = J_q^*$, with $J_q^*$ is the duality mapping of $U^*$, where $p, q > 1$ with $\frac{1}{p}+\frac{1}{q} = 1$ and the associated  gauge function is $t \to t^{q-1}$. 

\item Uniform smoothness $($uniform convexity$)$ of a Banach space $U$ is equivalent to the uniform convexity $(\text{uniform smoothness})$ of the dual space $U^*$. 
\end{enumerate}
\end{theorem}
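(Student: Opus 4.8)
The plan is to treat the four items separately, since each rests on a different classical fact about the convex functional $f(u) = \frac{1}{p}\|u\|^p$, whose subdifferential $\partial f$ coincides with the duality map $J_p$. For item (1) I would invoke the Hahn--Banach theorem: given $u \neq 0$ there is a norming functional $g \in U^*$ with $\|g\| = 1$ and $\langle u, g\rangle = \|u\|$, and then $u^* := \|u\|^{p-1} g$ satisfies $\|u^*\| = \|u\|^{p-1}$ and $\langle u, u^*\rangle = \|u\|^{p-1}\langle u,g\rangle = \|u\|\,\|u^*\|$, so $u^* \in J_p(u)$; for $u = 0$ one takes $u^* = 0$. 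This shows $J_p(u) \neq \emptyset$ for every $u$.

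For item (2) I would use the fact that uniform smoothness of $U$ forces the norm, and hence $f$, to be Fr\'echet differentiable. Since $J_p = \partial f$ and the subdifferential of a convex function is single-valued exactly at points of G\^ateaux differentiability, uniform smoothness of $U$ yields that $J_p(u)$ is a singleton for every $u$; equivalently, $J_p$ is single-valued, which is the intended meaning of ``one-one'' here.

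For item (3) I would first note that, by the Milman--Pettis theorem, a uniformly convex space is reflexive, so $U^{**} = U$ and $J_q^* : U^* \to U$ is well defined; moreover uniform convexity of $U$ is equivalent to uniform smoothness of $U^*$ (item (4)), which makes $J_q^*$ single-valued as well. The crux is the Fenchel conjugacy $\left(\frac{1}{p}\|\cdot\|^p\right)^* = \frac{1}{q}\|\cdot\|^q$ together with the general identity $(\partial f)^{-1} = \partial f^*$ for proper lower semicontinuous convex $f$; this gives $J_p^{-1} = J_q^*$ once both maps are known to be single-valued bijections. To verify it by hand I would take $u^* \in J_p(u)$ and check the defining relations of $J_q^*$: the pairing condition $\langle u, u^*\rangle = \|u\|\,\|u^*\|$ is symmetric, and from $\|u^*\| = \|u\|^{p-1}$ one gets $\|u^*\|^{q-1} = \|u\|^{(p-1)(q-1)} = \|u\|$, using $(p-1)(q-1) = 1$, which is exactly the norm condition for $u \in J_q^*(u^*)$. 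Strict convexity (a consequence of uniform convexity) gives injectivity, and surjectivity follows from reflexivity together with the conjugacy relation.

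The main obstacle is item (4), the self-dual character of the moduli. Here I would rely on Lindenstrauss' duality formula linking the smoothness modulus of one space with the convexity modulus of its dual, namely
\begin{equation*}
\rho_{U^*}(\tau) = \sup_{0 < \epsilon \le 2}\left\{\frac{\tau\epsilon}{2} - \delta_U(\epsilon)\right\},
\end{equation*}
together with its companion obtained by interchanging $U$ and $U^*$. From this Legendre-type relationship one reads off that $\lim_{\tau \to 0}\rho_{U^*}(\tau)/\tau = 0$ holds if and only if $\delta_U(\epsilon) > 0$ for all $\epsilon \in (0,2]$, and symmetrically, which is precisely the asserted equivalence of uniform smoothness and uniform convexity between $U$ and $U^*$. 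Establishing the Lindenstrauss formula itself is the genuinely technical point; everything else in the theorem reduces to Hahn--Banach, convex duality, and the Milman--Pettis reflexivity theorem.
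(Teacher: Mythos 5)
Your proposal is correct, and it follows the standard route: Hahn--Banach norming functionals for non-emptiness, differentiability of the norm (correctly reading ``one-one'' in item (2) as single-valuedness, which is what uniform smoothness actually yields) for item (2), the Asplund/Fenchel identities $J_p=\partial(\tfrac1p\|\cdot\|^p)$, $(\partial f)^{-1}=\partial f^*$ together with Milman--Pettis reflexivity for item (3), and the Lindenstrauss duality formulas for item (4). The paper itself supplies no proof of this theorem and simply defers to its references $[9,16]$, where essentially the argument you sketch is carried out, so there is nothing in the paper's treatment that your approach diverges from.
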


Next result recapitulates the main facts of Bregman distance and its relationship with the norm. See $[33, \ \text{Theorem}\ 2.60]$ for proof of parts $(1)$ and $(4)$ in the following theorem. 

\begin{theorem}
Let $U$ be  a  uniformly convex and uniformly smooth  Banach space. Then, for all $u_1, u_2 \in U$, following result holds:

\begin{enumerate}
\item $\Delta_p(u_1, u_2) \geq 0$ and $\Delta_p(u_1, u_2) = 0$ if and only if $u_1 = u_2$. 

\item If $U$ is $p$ convex, then we have  \begin{equation} \label{eq1} \begin{split} \Delta_p(u_1, u_2) \geq \frac{C_p}{p}\|u_1-u_2\|^p,  \end{split}
\end{equation} where   $C_p > 0$ is some constant. 

\item If $U^*$ is $q$ smooth,  then we have 
 \begin{equation} \label{eq1} \begin{split} \Delta_q(u_1^*, u_2^*) \leq \frac{G_q}{q}\|u_1^*-u_2^*\|^q, \quad  \forall \  u_1^*, u_2^* \in U^*,  \end{split}
\end{equation} where  $G_q > 0$ is some constant.
\item Following are equivalent:
\begin{enumerate}
\item $\lim_{n \to \infty} \|u_n-u\| = 0$, \item  $\lim_{n \to \infty}  \Delta_p(u_n, u) = 0$ and \item $\lim_{n \to \infty} \|u_n\| = \|u\|$ and $\lim_{n \to \infty} \langle J_p(u_n), u\rangle = \langle J_p(u),u\rangle$. 
\end{enumerate}
\end{enumerate}

\end{theorem}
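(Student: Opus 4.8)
The plan is to treat the four assertions in turn, each resting on the convexity of the power functional $f(u)=\tfrac1p\|u\|^{p}$, whose G\^ateaux derivative in the smooth space $U$ is exactly the duality map $J_p$.

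First, for part (1) I would read
\[
\Delta_p(u_1,u_2)=f(u_1)-f(u_2)-\langle f'(u_2),\,u_1-u_2\rangle
\]
as the gap in the first--order convexity inequality for $f$. Convexity of $f$ makes this gap nonnegative, giving $\Delta_p(u_1,u_2)\ge 0$. For the equality case I would use that a uniformly convex space is strictly convex, so $f$ is strictly convex for $p>1$; the affine support of $f$ at $u_2$ then meets the graph only at $u_2$, forcing $u_1=u_2$ whenever $\Delta_p(u_1,u_2)=0$, while the reverse implication is immediate from the definition.

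Next, for parts (2) and (3) the goal is to promote this qualitative convexity to quantitative power--type bounds. Here I would invoke the Xu--Roach characteristic inequalities of uniformly convex and uniformly smooth spaces, which convert the modulus estimate $\delta_U(\epsilon)\ge Y\epsilon^{p}$ into the uniform lower bound $\Delta_p(u_1,u_2)\ge\tfrac{C_p}{p}\|u_1-u_2\|^{p}$ and, dually, convert the smoothness estimate $\rho_{U^*}(\tau)\le Z\tau^{q}$ into the upper bound $\Delta_q(u_1^*,u_2^*)\le\tfrac{G_q}{q}\|u_1^*-u_2^*\|^{q}$ for the Bregman functional on $U^*$ (whose subgradient is $J_q^*$). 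Since, by the preceding theorem on duality maps, uniform smoothness of $U^*$ is dual to uniform convexity and $J_p^{-1}=J_q^*$, part (3) is the exact dual counterpart of part (2); I would therefore establish the convex lower estimate and read off the smooth upper estimate by conjugation to $U^*$.

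Finally, for part (4) I would close a cycle of implications. The step (b)$\Rightarrow$(a) is immediate from part (2), since $\tfrac{C_p}{p}\|u_n-u\|^{p}\le\Delta_p(u_n,u)\to0$. The step (a)$\Rightarrow$(b) follows from continuity of the norm together with the norm--to--norm continuity of $J_p$ on the uniformly smooth space $U$, which sends every term of $\Delta_p(u_n,u)$ to its limit. Continuity of $J_p$ likewise gives (a)$\Rightarrow$(c). For the converse (c)$\Rightarrow$(a) I would expand
\[
\Delta_p(u,u_n)=\tfrac1p\|u\|^{p}+\tfrac1q\|u_n\|^{p}-\langle J_p(u_n),\,u\rangle ,
\]
observe that the two hypotheses of (c) force the right--hand side to tend to $\tfrac1p\|u\|^p+\tfrac1q\|u\|^p-\|u\|^p=0$, and then apply part (2) with the arguments interchanged to obtain $\|u_n-u\|\to0$. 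This yields (a)$\Leftrightarrow$(b) and (a)$\Leftrightarrow$(c), hence the full equivalence.

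I expect the genuine difficulty to lie in parts (2) and (3): the passage from the geometric moduli $\delta_U$ and $\rho_{U^*}$ to the uniform power--type Bregman bounds is precisely the content of the Xu--Roach inequalities and is the only technically heavy ingredient, the remaining parts following softly from convexity, continuity of the duality map, and these two estimates. A secondary subtlety in part (4) is that condition (c) pairs $J_p(u_n)$ with $u$ rather than $J_p(u)$ with $u_n$; I would handle this through the expansion of $\Delta_p(u,u_n)$ above, which matches the hypotheses of (c) exactly, instead of a direct Kadec--Klee argument.
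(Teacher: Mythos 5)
Your treatment of parts (2) and (3) is exactly the route the paper takes: the characteristic inequality $\|x_1+x_2\|^p\geq\|x_1\|^p+p\langle J_p(x_1),x_2\rangle+\sigma_p(x_1,x_2)$ of Xu and Roach, with $\sigma_p(x_1,x_2)\geq C_p\|x_2\|^p$ under $p$-convexity, applied with $x_1=u_2$, $x_2=u_1-u_2$, and the dual statement for $q$-smoothness of $U^*$. For parts (1) and (4) the paper only cites $[31,\ \text{Theorem}\ 2.60]$, so your arguments there are an addition rather than a divergence; the convexity argument for (1) and the implications (a)$\Rightarrow$(b), (b)$\Rightarrow$(a), (a)$\Rightarrow$(c) are all sound (for (a)$\Rightarrow$(b) you do not even need continuity of $J_p$, only that $J_p(u)$ is a fixed bounded functional).

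One caveat: your step (c)$\Rightarrow$(a) invokes part (2), i.e.\ the bound $\Delta_p(u,u_n)\geq\frac{C_p}{p}\|u-u_n\|^p$, which is available only under the additional hypothesis that $U$ is $p$-convex with the same exponent $p$ as the gauge of $J_p$. Part (4) is stated for a general uniformly convex and uniformly smooth $U$, so as written your cycle proves the equivalence only in the $p$-convex case. The fix stays inside the same machinery: the Xu--Roach inequality without the power-type assumption still yields $\Delta_p(u,u_n)\geq\frac{1}{p}\sigma_p(u_n,u-u_n)$ with $\sigma_p$ built from a strictly increasing $\phi_p\geq K\delta_U(\cdot/2)$, and since $\|u_n\|$ is bounded by hypothesis (c), $\sigma_p(u_n,u-u_n)\to 0$ forces $\|u-u_n\|\to 0$ by uniform convexity (the degenerate case $u=0$ being immediate from $\|u_n\|\to 0$). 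With that replacement your proof covers the statement in full generality; in the setting where the paper actually uses the theorem, $U$ is assumed $p$-convex anyway, so the gap is harmless there.
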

Proof of parts $(2)$ and $(3)$ of  Theorem $2.2$ are  discussed after the  Remark $2.4$. In $[24, \text{Theorem}\ 2.5]$, results of the type $(2)$ and $(3)$  are discussed with the following Bregman distance $$\Delta_p'(u_1, u_2) = \frac{1}{p}\|u_2\|^p-\frac{1}{p}\|u_1\|^p-\langle J_p(u_1), u_2-u_1\rangle, \quad u_1, u_2 \in U.$$ Note that the definitions of Bregman distance employed in $[24]$ and in this paper are different, because of the interchange of arguments.

\begin{remark} $[38, \text{Theorem} \ 1]$  Let $\delta_X(\epsilon)$ represents the convexity modulus of a uniformly convex real Banach space $X$. Then, there exists a function $\phi_p \in \mathbb{A}$ such that \begin{equation}
  \|x_1+x_2\|^p \geq \|x_1\|^p+p\langle J_p(x_1), x_2\rangle +\sigma_p(x_1, x_2), \ \ x_1, x_2 \in X,\end{equation} where \begin{equation}
     \sigma_p(x_1, x_2) = p\int_0^1 \frac{\big(\|x_1+tx_2\|\vee \|x_1\|)^p}{t}\phi_p\bigg(\frac{t\|x_2\|}{\|x_1+tx_2\|\vee \|x_1\|}\bigg)\, dt,\end{equation} $($see Remark $2.5)$ and $$\mathbb{A} = \big\{\phi:\mathbb{R}^+\to \mathbb{R}^+: \phi(0) = 0, \ \phi(t)\ \text{is strictly increasing and}\ K\ \text{is a positive} $$ $$ \text{ constant such that} \ \phi(t)\geq K\delta_X(t/2)\big\}.$$ Here  $x \wedge y = \min(x, y)$  and $x \vee y = \max(x, y)$ for arbitrarily real numbers $x$ and $y$.  
Since, $\phi_p \in \mathbb{A}$, $(2.6)$ can be written as \begin{equation*}
     \sigma_p(x_1, x_2) \geq  pK_p\int_0^1 \frac{\big(\|x_1+tx_2\|\vee \|x_1\|)^p}{t}\delta_X\bigg(\frac{t\|x_2\|}{2(\|x_1+tx_2\|\vee \|x_1\|)}\bigg)\, dt,\end{equation*}with \begin{equation}
          K_p = 4(2+\sqrt{3})\min\bigg\{\frac{1}{2}p(p-1)\wedge 1, \bigg(\frac{1}{2}p\wedge 1\bigg)(p-1), \hspace{30mm}$$ $$ (p-1)[1-(\sqrt{3}-1)^q), 1-\bigg[1+\frac{(2-\sqrt{3})p}{p-1}\bigg]^{1-p}\bigg\}, \end{equation}  where the value of $K_p$ is obtained from Lemma $3$ in $[38]$. Also if $X$ is $p$ convex, then last inequality can  be written as \begin{equation*}
     \sigma_p(x_1, x_2) \geq  pYK_p\int_0^1 \frac{\big(\|x_1+tx_2\|\vee \|x_1\|)^p}{t}\bigg(\frac{t\|x_2\|}{2(\|x_1+tx_2\|\vee \|x_1\|)}\bigg)^p\, dt\end{equation*} \begin{equation*}
     = p\bigg(\frac{ YK_p}{2^p}\bigg)\|x_2\|^p\int_0^1 t^{p-1}\, dt = C_p\|x_2\|^p,\end{equation*} for some positive constants $Y$ and $C_p =\frac{ YK_p}{2^p}$. Above inequality and $(2.5)$ imply that  \begin{equation}
 \frac{1}{p} \|x_1+x_2\|^p- \frac{1}{p}  \|x_1\|^p  -\langle J_p(x_1), x_2\rangle \geq \frac1p\sigma_p(x_1, x_2)\geq \frac{C_p}{p}\|x_2\|^p.\end{equation}
\end{remark}
Now we come to the proof of part $(2)$ of Theorem $2.2$. In our notations, if we consider $X = U$, $x_1 = u_2$ and $x_2 = u_1-u_2$,  then $(2.8)$ implies that \begin{equation*} \Delta_p(u_1, u_2) = 
 \frac{1}{p} \|u_1\|^p- \frac{1}{p}  \|u_2\|^p  -\langle J_p(u_2), u_1-u_2\rangle \geq \frac{C_p}{p}\|u_1-u_2\|^p,\end{equation*} which is the desired inequality. Here $C_p$ is a constant depending on $p$. Part $(3)$ can be proved similarly by using Theorem $2$ in $[38]$.
 \begin{remark}
 In $[38,  \text{equation}\ 2.2]$, value of $\sigma_p$ given in the statement is \begin{equation*}
     \sigma_p(x, y) = p\int_0^1 \frac{\big(\|x+ty\|\vee \|x\|)^p}{t}\phi_p\bigg(\frac{t\|y\|}{(\|x+ty\|\vee \|y\|)}\bigg)\, dt.\end{equation*} But the actual value is \begin{equation*}
     \sigma_p(x, y) = p\int_0^1 \frac{\big(\|x+ty\|\vee \|x\|)^p}{t}\phi_p\bigg(\frac{t\|y\|}{(\|x+ty\|\vee \|x\|)}\bigg)\, dt,\end{equation*} which can be easily verified from the proof given there.
 \end{remark}

\section{Convergence and convergence rates} 

In the present section, we analyze the convergence and its rates for the iteratively regularized Landweber iteration method $(2.1)$-$(2.2)$. Here, we consider the notation $$B = B_{\rho}^{\Delta}(u^{\dagger}) := \{ u \in U: \Delta_p^{u_0}(u^{\dagger}, u ) \leq \rho^2\},$$ where $\Delta_p^{u_0}(u^{\dagger}, u ) = \Delta_p(u^{\dagger}-u_0, u-u_0)$, $\rho > 0$ is some constant and $u^{\dagger}$ is the solution of  $F(u)=v$ which may not  be unique. We assume $B \subset D(F)$. To prove the main results of the paper, we need to have certain assumptions accumulated below. 
\begin{assum}{} \leavevmode

\begin{enumerate}
\item $U$ is  $q$ smooth and $p$ convex  with $\frac{1}{p}+\frac{1}{q} = 1$, where $p, q > 1$.  

\item $F$ has a Fr\'echet derivative  $F'(\cdot)$ and it satisfies the following local  estimate \begin{equation} \label{eq1} \begin{split} \|F'(u_1)-F'(u_2)\| \leq L \|u_1-u_2\|, \quad \forall\  u_1, u_2 \in B, \end{split}\end{equation} where $L>0$ is a constant. 
\item $F'(\cdot)$ satisfies the boundedness  condition, i.e.  $\|F'(u)\| \leq \hat{L}$ for all $u \in B$ for some positive constant $\hat{L}$. 
\item $F$ is weakly sequentially closed. 

\item Elements in $B$ satisfy the following H\"older stability estimate \begin{equation} \label{eq1} \begin{split} \Delta_p^{u_0}(u_1, u_2) \leq C_F^p \|F(u_1)-F(u_2) \|^{\frac{1+\epsilon}{2}p}, \quad u_1, u_2 \in B, \ \epsilon \in (0, 1], \end{split}\end{equation} where $C_F > 0$ is a constant. 
\item  $u_0$ lies in $B$ and there exists a sequence $\{r_k\}_{k\in \mathbb{N}\cup \{0\}}$ such that $\Delta_p^{u_0}(u_k^{\delta}, u^{\dagger})\geq r_k( \Delta_p^{u_0}(u^{\dagger}, u_k^{\delta}))$ for each $k$. For example in Hilbert spaces, we have $$\Delta_2^{u_0}( u^{\dagger}, u_k^{\delta})=\frac{1}{2}\|u^{\dagger}- u_k^{\delta}\|^2 = \Delta_2^{u_0}(u_k^{\delta}, u^{\dagger}),$$ which means $r_k=1$ for each $k$.
\item The sequence $\{\beta_k\}$ satisfies $(2.2)$, $\sum_{k}\beta_k < \infty$ and $\beta_{\max}$ is sufficiently small.
\item  $\mu$ is chosen such that \begin{equation} \label{eq1} \begin{split} \mu^{q-1} < \frac{q}{2^q \hat{L}^q G_q}. \qquad  \end{split} 
\end{equation}
\item  $\rho^2$ satisfies \begin{equation} \label{eq1} \begin{split} \rho^2 = \hat{L}^{-p}(LC_F^2)^{\frac{-p}{\epsilon}}  \bigg( \frac{C_p}{p}\bigg)^{1+\frac{2}{\epsilon}}. \end{split} 
\end{equation}
\item a-priori choice of the stopping index $k_{\star}$ is $$k_{\star}(\delta)=\min\{k \in \mathbb{N}:\ \beta_k\leq \tau\delta\},$$with $\tau >0$ sufficiently large.
\end{enumerate}
\end{assum}
\begin{remark}
The H\"older type stability estimate $(3.2)$ for the  special case $p=2$ can be obtained by a lower bound on the Fr\'echet derivative $F'$. Let there exists a constant $K > 0$ such that $$\bigg\|F'(u)\bigg(\frac{u-u^{\dagger}}{\|u-u^{\dagger}\|}\bigg)\bigg\| \geq K \|u-u^{\dagger}\|^{1-\epsilon_1} \quad \forall u \in D(F) \cap B_r(u^{\dagger}),$$  where $B_r(u^{\dagger})$ is some ball of radius $r$ $($sufficiently small$)$ around $u^{\dagger}$ and $\epsilon_1 \in (0, 1]$. The last inequality and the estimate $$\|F(u')-F(u)-F'(u)(u'-u)\| \leq \frac{L}{2}\|u'-u\|^2 \quad \forall\ u, u' \in D(F),$$imply that $$K\|u-u^{\dagger}\|^{2-\epsilon_1} \leq\|F(u)-F(u^{\dagger})-F'(u)(u-u^{\dagger})\|+\|F(u)-F(u^{\dagger})\|$$ $$ \leq \frac{L}{2}\|u-u^{\dagger}\|^2 + \|F(u)-F(u^{\dagger})\|, \quad\forall u \in D(F) \cap B_r(u^{\dagger}).$$ Since   $r$ is  small, last inequality can also be written as $$K\|u-u^{\dagger}\|^{2-\epsilon_1} \leq \frac{L}{2}\|u-u^{\dagger}\|^{2-\epsilon_1} + \|F(u)-F(u^{\dagger})\| $$which immediately leads to the estimate $$\|u-u^{\dagger}\| \leq C'\|F(u)-F(u^{\dagger})\|^{\frac{1}{2-\epsilon_1}} \quad \forall u \in D(F) \cap B_r(u^{\dagger}),$$ where $C'$ is a constant depending on $K$ and $L$. Since,  in the case of Hilbert spaces,  $p=2$ and $\Delta_2^{u_0}(u, u^{\dagger}) = \frac{1}{2}\|u-u^{\dagger}\|^2$ where $u \in U$, an estimate of the type $(3.2)$ can be obtained. In general, it is impossible to obtain a lower bound for $F'$ due to ill-posedness of almost all the inverse problems. This lower bound has been studied for many inverse problems under various assumptions, see, for instance,  $[7, 13]$. The key fact used in $[7, 13]$  to obtain the lower bounds  is  that the forward operator has been projected  properly.
\end{remark}
\begin{remark}
  For $\epsilon = 1$ and  $u_0 = 0$ in $(3.2)$ $($observe that $(3.2)$ with  $\epsilon = 1$ is  the Lipschitz-type stability estimate$)$,  we have $$\langle J_p(u^{\dagger}), u-u^{\dagger}\rangle \leq \|u^{\dagger}\|^{p-1}\|u-u^{\dagger}\| \leq \bigg(\frac{p}{C_p}\bigg)^{\frac{1}{p}} \|u^{\dagger}\|^{p-1}\Delta_p(u, u^{\dagger})^{\frac{1}{p}} $$ $$\leq K \|F(u)-F(u^{\dagger})\| \quad \forall u \in B,$$ where above holds by using $(2.3)$ and  $K = C_F\|u^{\dagger}\|^{p-1}\big(\frac{p}{C_p}\big)^{\frac{1}{p}}$. In $[26]$, it is shown that   the last inequality implies the source condition $J_p(u^{\dagger}) = F'(u^{\dagger})^*v$ for some $v$ such that $\|v\| \leq 1$.
\end{remark}
Now, we are ready to state our main result in which we obtain the convergence and  its rates with some additional assumptions on the sequence $\{\beta_k\}$. 
\begin{theorem}
Let $F$ be a non-linear operator between the Banach spaces $U$, $V$ and the operator equation $F(u) = v, v \in V$, has  a solution $u^{\dagger}$. Suppose that the Assumption $3.1$ holds,  $v^{\delta}\in V$ be such that $\|v^{\delta}-v\|\leq \delta$. Then all the iterates $u_{k+1}^{\delta}$  of iteratively regularized Landweber iteration  method $(2.1)$-$(2.2)$ remain in $B$ for all $k\leq k_{\star}(\delta)-1$ provided $\beta_{\max}$ is sufficiently small $($see Lemma $3.2$ for  exact estimate of $\beta_{\max})$. Moreover, iterates satisfy the recurrence relation
$$\gamma_{k+1}^{\delta} \leq \gamma_k^{\delta}+ K_1\delta^p+K_2\delta^{\epsilon}+K_3\delta^{p+\epsilon}+K_4\delta-K_6\rho^2,$$  for some constants $K_i, 1\leq i\leq 4$, and $K_6>0$, where $\gamma_k = \Delta_p^{u_0}(u^{\dagger}, u_k^{\delta})$. We also obtain the convergence rates $$\Delta_p^{u_0}(u^{\dagger}, u_{k_{\star}})-(1-K_6)\rho^2=O(\delta^{\epsilon}), \ \text{as}\ \delta \to 0.$$ For $\delta=0$, iterates $u_{k+1}$   of iteratively regularized Landweber iteration  method $(2.1)$-$(2.2)$ not only remain in $B$ but also converge to the solution $u^{\dagger}$. Further,  we get the following  rates:  \begin{enumerate}
\item  Iterates $\gamma_k = \Delta_p^{u_0}(u^{\dagger}, u_k)$ satisfy the recursion formula \begin{equation}\label{eq1} \begin{split} \gamma_{k+1} \leq  - K_8\gamma_k^{\frac{2}{1+\epsilon}} + \alpha_k \gamma_k + K_{11} \beta_k,      \end{split}  
\end{equation} for some positive constants $K_8, K_{11}$  and $\{\alpha_k\}$ is a sequence converges to $1$.  Further, if $\{\beta_k\}$ satisfies $\beta_k \leq C\gamma_k$ $($smoothness condition$)$ for some constant $C > 0$, then the convergence rate, for $\epsilon \in (0,1)$,  is given by  \begin{equation*}\label{eq1} \begin{split}\Delta_p^{u_0}(u^{\dagger}, u_k) \leq   \bigg(\big( g_k \rho^2\big)^{-\frac{1-\epsilon}{1+\epsilon}}  + h_k \bigg)^{-\frac{1+\epsilon}{1-\epsilon}}, \quad k =  1, 2, \ldots    \end{split}  
\end{equation*} where \begin{equation*}\hspace{10mm}
g_k = \prod_{i=0}^{k-1}d_i,  \  k \geq 1,\ \text{and}\ h_k = \sum_{j=1}^{k-1}\bigg(d_jd_{j+1}\ldots d_{k-1}\bigg)^{-\frac{1+\epsilon}{1-\epsilon}}f_{j-1} + f_{k-1}, \ k \geq 2,  \ h_1 = f_0, 
\end{equation*}  with $f_k= te_k d_k ^{-t}$, $d_k = \alpha_k +CK_{11}$, $e_k = \frac{K_8}{d_k}$ and  $t=\frac{1-\epsilon}{1+\epsilon}$. \newline
For $\epsilon = 1$, we get \begin{equation*}\label{eq1} \begin{split} \Delta_p^{u_0}(u^{\dagger}, u_k)   \leq \prod_{i=0}^{k-1}(- K_8 + \alpha_i + K_{11}C) \rho^2, \quad k =   1, 2,  \ldots    \end{split}  
\end{equation*}
\item  we  also obtain the rate \begin{equation*}\label{eq1} \begin{split} \Delta_p^{u_0}(u^{\dagger}, u_k) =  O(\beta_k^{q-1}), \quad \text{as} \  k \to \infty, \end{split}  
\end{equation*} provided  \begin{equation*}\label{eq1} \begin{split}    K_{12} +  \eta \beta_k^{-1}\bigg[ \alpha_k      -  \bigg( \frac{\beta_{k+1}}{\beta_{k}}\bigg)^{q-1}\bigg] \leq 0,  \end{split}  
\end{equation*} for some constants $\eta , K_{12}$,  and  a sequence $\{\alpha_k\}$ converging to $1$. 
\end{enumerate}
\end{theorem}
Instead of giving a single proof of Theorem $3.1$, we discuss it in parts in the form of a series of lemmas to have a better understanding. 
 In the first lemma, we obtain an estimate of $\Delta_p^{u_0}(u^{\dagger}, u_{k+1}^{\delta}) - \Delta_p^{u_0}( u^{\dagger}, u_{k}^{\delta})$.
\begin{lemma}
Let $F$ be a non-linear operator between the Banach spaces $U$, $V$ and the operator equation $F(u) = v, v \in V$, has  a solution $u^{\dagger}$. Suppose that  Assumption $3.1$ holds and $v^{\delta}\in V$ be such that $\|v^{\delta}-v\|\leq \delta$. Then the iterates $u_{k+1}^{\delta}$  of iteratively regularized Landweber iteration  method $(2.1)$-$(2.2)$ satisfy the following inequality \begin{equation*}   \Delta_p^{u_0}(u^{\dagger}, u_{k+1}^{\delta}) - \Delta_p^{u_0}( u^{\dagger}, u_{k}^{\delta}) \leq  \bigg( 2^{q-1} \frac{G_q}{q}  \mu^q \hat{L}^q - \mu\bigg)\|  F(u_k^{\delta})- v^{\delta}\|^{p}  + \frac{\mu }{2} LC_F^2\bigg(\frac{p}{C_p}\bigg)^{2/p}\|F(u_k^{\delta})-v^{\delta}\|^{p+\epsilon}   \end{equation*}    \begin{equation*} + \bigg( 2^{p+q-2}  \frac{G_q}{q} \beta_k^q + \beta_k  \frac{(p-1)\epsilon_2^{\frac{p}{p-1}}}{p}\bigg) \| u^{\dagger}-u_0 \|^p    +\mu  \|F(u_k^{\delta})-v^{\delta}\|^{p-1}\delta+  
 \bigg(\frac{\beta_k \epsilon_2^{-p}}{ C_p} -  \end{equation*} \begin{equation*}    (1+r_k) \beta_k + 2^{p+q-2}\beta_k^q \frac{G_q}{q} \frac{p}{C_p}   \bigg) \gamma_k^{\delta},\quad \text{where}\ \epsilon_2>0.\end{equation*} 
\end{lemma}
\begin{proof}
From  Lemma $2.1$ and  $(2.1)$, we can write \begin{equation*} \label{eq1}  \begin{split} \Delta_p^{u_0}(u^{\dagger}, u_{k+1}^{\delta}) - \Delta_p^{u_0}( u^{\dagger}, u_{k}^{\delta})   = \Delta_p^{u_0}(u_{k}^{\delta}, u_{k+1}^{\delta}) + \langle J_p(u_k^{\delta}-u_0) - J_p(u_{k+1}^{\delta}-u_0), u^{\dagger}-u_k^{\delta} \rangle\hspace{5mm}\end{split} 
\end{equation*}
\begin{equation}  \label{eq1}  \begin{split}  = \Delta_p^{u_0}(u_{k}^{\delta}, u_{k+1}^{\delta}) - \mu \langle j_p(F(u_k^{\delta})-v^{\delta}), F'(u_k^{\delta})(u_k^{\delta}-u^{\dagger})\rangle   + \beta_k\langle J_p(u^{\dagger}-u_0), u^{\dagger}-u_k^{\delta}\rangle \\- \beta_k \langle J_p(u^{\dagger}-u_0) - J_p(u_k^{\delta}-u_0), u^{\dagger}-u_k^{\delta} \rangle. \end{split} 
\end{equation} Now,  we estimate  each of the four terms of the right side to $(3.6)$  individually. For the first term, using Definitions $2.5$  and $2.1$,   we have \begin{equation*} \label{eq1}  \begin{split}  \Delta_p^{u_0}(u_{k}^{\delta}, u_{k+1}^{\delta}) =\frac{1}{p}\|u_{k}^{\delta}-u_0\|^p- \frac{1}{p}\|u_{k+1}^{\delta}-u_0\|^p-\langle J_p(u_{k+1}^{\delta}-u_0), u_{k}^{\delta}- u_{k+1}^{\delta}\rangle\hspace{30mm}\\   =\frac{1}{q}\|u_{k+1}^{\delta}-u_0\|^p- \frac{1}{q}\|u_{k}^{\delta}-u_0\|^p-\langle J_p(u_{k+1}^{\delta}-u_0), u_{k}^{\delta}- u_{k+1}^{\delta}\rangle+\|u_{k}^{\delta}-u_0\|^p-\|u_{k+1}^{\delta}-u_0\|^p \\   =\frac{1}{q}\|u_{k+1}^{\delta}-u_0\|^p- \frac{1}{q}\|u_{k}^{\delta}-u_0\|^p-\langle J_p(u_{k+1}^{\delta}-u_0), (u_{k}^{\delta}-u_0)- (u_{k+1}^{\delta}-u_0)\rangle\hspace{26mm}\\ + \langle u_{k}^{\delta}-u_0, J_p(u_{k}^{\delta}- u_{0})\rangle  -  \langle u_{k+1}^{\delta}-u_0, J_p(u_{k+1}^{\delta}- u_{0})\rangle \\ = \frac{1}{q}\|u_{k+1}^{\delta}-u_0\|^p- \frac{1}{q}\|u_{k}^{\delta}-u_0\|^p + \langle u_{k}^{\delta}-u_0, J_p(u_{k}^{\delta}- u_{0})  -   J_p(u_{k+1}^{\delta}- u_{0})\rangle\hspace{25mm}\\ = \frac{1}{q}\|J_p(u_{k+1}^{\delta}-u_0)\|^q- \frac{1}{q}\|J_p(u_{k}^{\delta}-u_0)\|^q - \langle u_{k}^{\delta}-u_0, J_p(u_{k+1}^{\delta}- u_{0})  -   J_p(u_{k}^{\delta}- u_{0})\rangle\hspace{10mm}\\ = \Delta_q(J_p(u_{k+1}^{\delta}-u_0), J_p(u_{k}^{\delta}-u_0)).\hspace{30mm} \end{split}
\end{equation*}
Use $(2.4)$ and  then $(2.1)$ in above to obtain
\begin{equation}\begin{split} \Delta_p^{u_0}(u_{k}^{\delta}, u_{k+1}^{\delta})
\leq 
  \frac{G_q}{q} \|J_p(u_{k+1}^{\delta}-u_0)-J_p(u_k^{\delta}-u_0)\|^{q} \hspace{22mm}\\ =  \frac{G_q}{q} \|\beta_k J_p(u_{k}^{\delta}-u_0)+ \mu F'(u_k^{\delta})^* j_p(F(u_k^{\delta})- v^{\delta})\|^{q}.  \end{split}
\end{equation}  
Now using the  estimate  \begin{equation*} \label{eq1} \begin{split} \|u_1+u_2\|^r \leq  2^{r-1}(\|u_1\|^r+\|u_2\|^r),\ \ r\geq 1,\ \ u_1, u_2 \in U, \end{split} 
\end{equation*}see $[26, \text{Lemma} \ 3.20]$, twice into $(3.7)$,   we have  
\begin{equation*} \label{eq1}  \begin{split}  \Delta_p^{u_0}(u_{k}^{\delta}, u_{k+1}^{\delta}) \leq  2^{q-1} \frac{G_q}{q} \bigg( \beta_k^q  \| J_p(u_{k}^{\delta}-u_0) \|^q+ \mu^q \| F'(u_k^{\delta})^* j_p(F(u_k^{\delta})- v^{\delta})\|^{q} \bigg) \hspace{20mm}\\ =  2^{q-1} \frac{G_q}{q} \bigg( \beta_k^q  \| u_{k}^{\delta}-u_0 \|^p+ \mu^q \| F'(u_k^{\delta})^* j_p(F(u_k^{\delta})- v^{\delta})\|^{q} \bigg)\hspace{25mm} \\ \leq  2^{q-1} \frac{G_q}{q} \bigg( 2^{p-1}  \beta_k^q \big( \| u^{\dagger}-u_0 \|^p + \|  u^{\dagger} - u_k^{\delta}\|^p\big)+ \mu^q \| F'(u_k^{\delta})^* j_p(F(u_k^{\delta})- v^{\delta})\|^{q} \bigg) \end{split} 
\end{equation*} \begin{equation} \label{eq1} \begin{split} \hspace{20mm}  \leq  2^{q-1} \frac{G_q}{q} \bigg( 2^{p-1}  \beta_k^q \big( \| u^{\dagger}-u_0 \|^p + \frac{p}{C_p} \Delta_p^{u_0}(u^{\dagger}, u_{k}^{\delta}) \big)+ \mu^q \hat{L}^q\|  F(u_k^{\delta})- v^{\delta}\|^{p} \bigg), \end{split} 
\end{equation} where the last inequality is obtained by incorporating $(2.3)$ and  $(3)$ of Assumption $3.1$  provided $u_k^{\delta}$ satisfies the estimate $(3.2)$ which  will be shown later.  
\newline 
Next, let us estimate  the  second term on the right side to $(3.6)$ as $$
 - \mu \langle j_p(F(u_k^{\delta})-v^{\delta}), F'(u_k^{\delta})(u_k^{\delta}-u^{\dagger})\rangle \qquad \qquad \qquad \qquad \qquad \qquad \qquad \qquad \qquad \qquad \qquad $$
  \begin{equation*}\label{eq1}    = -\mu \langle j_p(F(u_k^{\delta})-v^{\delta}), F(u_k^{\delta})-v^{\delta} \rangle   + \mu \langle j_p(F(u_k^{\delta})-v^{\delta}), F(u_k^{\delta})-v^{\delta} - F'(u_k^{\delta})(u_k^{\delta}-u^{\dagger})\rangle   
\end{equation*}  \begin{equation*}\label{eq1}    = -\mu \|F(u_k^{\delta})-v^{\delta}\|^p   + \mu \langle j_p(F(u_k^{\delta})-v^{\delta}), F(u_k^{\delta})-v^{\delta} - F'(u_k^{\delta})(u_k^{\delta}-u^{\dagger})\rangle. 
\end{equation*} By employing fundamental theorem of calculus for $F'(\cdot)$, i.e. $$\|F(u_k^{\delta})-v^{\delta}-F'(u_k^{\delta})(u_k^{\delta}-u^{\dagger})\|\leq \frac{L}{2}\|u_k^{\delta}-u^{\dagger}\|^2+\delta,$$ and  $(3.1)$ in the last equality to obtain $$
 - \mu \langle j_p(F(u_k^{\delta})-v^{\delta}), F'(u_k^{\delta})(u_k^{\delta}-u^{\dagger})\rangle \qquad \qquad \qquad \qquad \qquad \qquad \qquad \qquad \qquad \qquad \qquad $$ \begin{equation*}\label{eq1}    \leq -\mu \|F(u_k^{\delta})-v^{\delta}\|^p   + \frac{\mu L}{2}  \|F(u_k^{\delta})-v^{\delta}\|^{p-1}\|u_k^{\delta}-u^{\dagger}\|^2+ \mu  \|F(u_k^{\delta})-v^{\delta}\|^{p-1}\delta.
\end{equation*} Using $(2.3)$ and then  $(3.2)$, we further estimate $$
 - \mu \langle j_p(F(u_k^{\delta})-v^{\delta}), F'(u_k^{\delta})(u_k^{\delta}-u^{\dagger})\rangle \qquad \qquad \qquad \qquad \qquad \qquad \qquad \qquad \qquad \qquad \qquad $$ \begin{equation}\label{eq1}    \leq -\mu \|F(u_k^{\delta})-v^{\delta}\|^p   + \frac{\mu }{2} LC_F^2\bigg(\frac{p}{C_p}\bigg)^{2/p} \|F(u_k^{\delta})-v^{\delta}\|^{p+\epsilon}+\mu  \|F(u_k^{\delta})-v^{\delta}\|^{p-1}\delta.\end{equation}
 Now, let us turn to estimate the third term of the right side to $(3.6)$ as 
\begin{equation*} \label{eq1}  \begin{split}   \beta_k\langle J_p(u^{\dagger}-u_0), u^{\dagger}-u_k^{\delta}\rangle \leq \beta_k|\langle J_p(u^{\dagger}-u_0), u^{\dagger}-u_k^{\delta}\rangle| \\ \leq \beta_k\| J_p(u^{\dagger}-u_0)\| \| u^{\dagger}-u_k^{\delta}\| \\ = \beta_k \|u^{\dagger}-u_0\|^{p-1} \| u^{\dagger}-u_k^{\delta}\|. \end{split} 
\end{equation*} Thanks to Young's inequality $ab \leq \frac{a^r}{r} + \frac{b^s}{s}$ with H\"older conjugates $r, s$ for  $a = \epsilon_2\|u^{\dagger}-u_0\|^{p-1}$, $b = \epsilon_2^{-1}\| u^{\dagger}-u_k^{\delta}\|$, $r = \frac{p}{p-1}$, $s = p$,  $\epsilon_2>0$, and $(2.3)$ to further  yield \begin{equation} \label{eq1}  \begin{split}   \beta_k\langle J_p(u^{\dagger}-u_0), u^{\dagger}-u_k^{\delta}\rangle \leq \beta_k \bigg( \frac{(p-1)\epsilon_2^{\frac{p}{p-1}}}{p}\|u^{\dagger}-u_0\|^{p} + \frac{\epsilon_2^{-p}}{p}\| u^{\dagger}-u_k^{\delta}\|^p \bigg) \\ \leq \beta_k \bigg( \frac{(p-1)\epsilon_2^{\frac{p}{p-1}}}{p}\|u^{\dagger}-u_0\|^{p} + \frac{\epsilon_2^{-p}}{ C_p} \Delta_p^{u_0}( u^{\dagger}, u_k^{\delta}) \bigg).  \end{split} 
\end{equation}
Finally, using Lemma $2.1$ in the fourth term on the right hand side to $(3.6)$ to obtain 
\begin{equation*}\label{eq1}  
- \beta_k \langle J_p(u^{\dagger}-u_0) - J_p(u_k^{\delta}-u_0), u^{\dagger}-u_k^{\delta} \rangle = - \beta_k \Delta_p^{u_0}(u^{\dagger}, u_k^{\delta}) - \beta_k \Delta_p^{u_0}( u_k^{\delta}, u^{\dagger})+ \beta_k \Delta_p^{u_0}( u_k^{\delta}, u_k^{\delta})
\end{equation*} \begin{equation}\label{eq1}  
\hspace{20mm}\leq - \beta_k \Delta_p^{u_0}(u^{\dagger}, u_k^{\delta}) - \beta_k \Delta_p^{u_0}( u_k^{\delta}, u^{\dagger})\leq - \beta_k(1+r_k) \Delta_p^{u_0}(u^{\dagger}, u_k^{\delta}),
\end{equation} where the last inequality holds because of  $(6)$ of Assumption $3.1$. 
Inserting all the estimates $(3.8)$-$(3.11)$ into $(3.6)$ and use the notation $\gamma_k^{\delta}  = \Delta_p^{u_0}(u^{\dagger}, u_k^{\delta})$, we have
\begin{equation*} \gamma_{k+1}^{\delta} - \gamma_k^{\delta}
   \leq  2^{q-1} \frac{G_q}{q} \bigg( 2^{p-1}  \beta_k^q \big( \| u^{\dagger}-u_0 \|^p + \frac{p}{C_p} \gamma_k^{\delta} \big)+ \mu^q \hat{L}^q\|  F(u_k^{\delta})- v^{\delta}\|^{p} \bigg)   -\mu \|F(u_k^{\delta})-v^{\delta}\|^p \end{equation*}\begin{equation*}
       + \frac{\mu }{2} LC_F^2\bigg(\frac{p}{C_p}\bigg)^{2/p} \|F(u_k^{\delta})-v^{\delta}\|^{p+\epsilon}  +  \beta_k \bigg( \frac{(p-1)\epsilon_2^{\frac{p}{p-1}}}{p}\|u^{\dagger}-u_0\|^{p}\bigg)   +  \frac{\beta_k\epsilon_2^{-p}}{ C_p} \gamma_k^{\delta}  \end{equation*}\begin{equation*}  - \beta_k(1+r_k) \gamma_k^{\delta}+ \mu  \|F(u_k^{\delta})-v^{\delta}\|^{p-1}\delta 
\end{equation*}
\begin{equation*}  = \bigg( 2^{q-1} \frac{G_q}{q}  \mu^q \hat{L}^q - \mu\bigg)\|  F(u_k^{\delta})- v^{\delta}\|^{p} + \bigg( 2^{p+q-2}  \frac{G_q}{q} \beta_k^q + \beta_k  \frac{(p-1)\epsilon_2^{\frac{p}{p-1}}}{p}\bigg) \| u^{\dagger}-u_0 \|^p     + \frac{\mu }{2} LC_F^2\bigg(\frac{p}{C_p}\bigg)^{2/p}  \end{equation*} \begin{equation}   \times\|F(u_k^{\delta})-v^{\delta}\|^{p+\epsilon}  + \bigg(\frac{\beta_k \epsilon_2^{-p}}{ C_p}     -(1+r_k) \beta_k + 2^{p+q-2}\beta_k^q \frac{G_q}{q} \frac{p}{C_p}   \bigg) \gamma_k^{\delta}+\mu  \|F(u_k^{\delta})-v^{\delta}\|^{p-1}\delta.   
\end{equation}
\end{proof}
\begin{remark}
We have intentionally introduced the parameter $\epsilon_2$ in $(3.10)$. Rationale behind the introduction of this $\epsilon_2$ is discussed in Remark $3.5$.
\end{remark}
In the next lemma, we show that all the iterates of our iteration scheme remain in $B$ using  Lemma $3.1$ under certain assumptions. 
\begin{lemma}
Suppose that all the assumptions of Lemma $3.1$ hold. Then, all the iterates of $(2.1)$-$(2.2)$ remain in $B$ for all $k\leq k_*(\delta)-1$   provided $\tau$ defined in $(10)$ of Assumption $3.1$ is sufficiently large and $\beta_{\max}$ is such that $$\beta_{\max} <  \sqrt[q-1]{\frac{1}{2^{p+q-1}} \frac{C_p}{p}\frac{q}{G_q}\bigg(1+r_k-\frac{(p-1)\epsilon_2^{\frac{p}{p-1}}}{C_p}-\frac{\epsilon_2^{-p}}{C_p}\bigg)}.$$
\end{lemma}
\begin{proof}
Let us assume that $u_k^{\delta} \in B$, and then applying $(3.3)$ in the first term on the right side to $(3.12)$, we get \begin{equation*} \gamma_{k+1}^{\delta} - \gamma_k^{\delta}  \leq -\frac{\mu}{2}\|  F(u_k^{\delta})- v^{\delta}\|^{p} + \bigg( 2^{p+q-2}  \frac{G_q}{q} \beta_k^q + \beta_k  \frac{(p-1)\epsilon_2^{\frac{p}{p-1}}}{p}\bigg) \| u^{\dagger}-u_0 \|^p+\mu  \|F(u_k^{\delta})-v^{\delta}\|^{p-1}\delta  \end{equation*}  \begin{equation}  + \frac{\mu }{2} LC_F^2\bigg(\frac{p}{C_p}\bigg)^{2/p} \|F(u_k^{\delta})-v^{\delta}\|^{p+\epsilon}  + \bigg(\frac{\beta_k\epsilon_2^{-p}}{ C_p}     - \beta_k(1+r_k)+ 2^{p+q-2}\beta_k^q \frac{G_q}{q} \frac{p}{C_p}   \bigg) \gamma_k^{\delta}\hspace{0mm}.   
\end{equation} From the mean value inequality,  $(3)$ of Assumption $3.1$, $(2.3)$ and $(3.4)$,   we get \begin{equation*}  \|F(u_k^{\delta})-v^{\delta}\| = \|F(u_k^{\delta})-F(u^{\dagger})\|+\delta \leq \hat{L}\|u_k^{\delta}-u^{\dagger}\| +{\delta} \leq \hat{L} \bigg(\frac{p}{C_p}\bigg)^{1/p} \Delta_p^{u_0}(u^{\dagger}, u_k^{\delta})^{1/p}+{\delta}  \end{equation*} \begin{equation} \leq \hat{L} \bigg(\frac{p}{C_p}\bigg)^{1/p}\rho^{\frac{2}{p}}+{\delta} \leq \bigg(\frac{C_p}{p}\bigg)^{\frac{2}{p\epsilon}}(LC_F^2)^{\frac{-1}{\epsilon}}+{\delta}. 
\end{equation}Above with the inequality $$(r+s)^{\epsilon}\leq r^{\epsilon}+s^{\epsilon} \ \text{for}\ r, s \geq 0,\ {\epsilon}\in [0, 1],$$ implies that
\begin{equation*} \|F(u_k^{\delta})-v^{\delta}\|^{\epsilon}  \leq \bigg[\bigg(\frac{C_p}{p}\bigg)^{\frac{2}{p\epsilon}}(LC_F^2)^{\frac{-1}{\epsilon}}+{\delta}\bigg]^{\epsilon} \leq \bigg(\frac{C_p}{p}\bigg)^{\frac{2}{p}}(LC_F^2)^{-1}+\delta^{\epsilon}.   
\end{equation*}
This inequality further leads to the estimate
\begin{equation*}\label{eq1} \begin{split}
 \hspace{-50mm}-\frac{\mu}{2}\|  F(u_k^{\delta})- v^{\delta}\|^{p} +  \frac{\mu }{2} LC_F^2\bigg(\frac{p}{C_p}\bigg)^{2/p} \|F(u_k^{\delta})-v^{\delta}\|^{p+\epsilon} \end{split}  \hspace{20mm}
\end{equation*} \begin{equation}  = 
 \|F(u_k^{\delta})-v^{\delta}\|^{p} \bigg[-\frac{\mu}{2} +  \frac{\mu}{2} LC_F^2\bigg(\frac{p}{C_p}\bigg)^{2/p} \|F(u_k^{\delta})-v^{\delta}\|^{\epsilon}\bigg]  \leq \frac{\mu }{2} LC_F^2\bigg(\frac{p}{C_p}\bigg)^{2/p}\delta^{\epsilon}\|  F(u_k^{\delta})- v^{\delta}\|^{p}.   
\end{equation} Employing the estimate$$(r_1+s_1)^{\lambda}\leq 2^{\lambda-1}(r_1^{\lambda}+s_1^{\lambda}) \ \text{for}\ r_1, s_1 \geq 0,\ \lambda \geq 1,$$ in $(3.15)$ after incorporating   $(3.14)$ in it to obtain \begin{equation*}\label{eq1} \begin{split}
 \hspace{-50mm}-\frac{\mu}{2}\|  F(u_k^{\delta})- v^{\delta}\|^{p} +  \frac{\mu }{2} LC_F^2\bigg(\frac{p}{C_p}\bigg)^{2/p} \|F(u_k^{\delta})-v^{\delta}\|^{p+\epsilon} \end{split}  
\end{equation*} \begin{equation}
\leq 2^{p-1}\frac{\mu }{2} \big(\delta^{\epsilon}\big)\bigg(\frac{C_p}{p}\bigg)^{\frac{2(p-\epsilon)}{p\epsilon}}(LC_F^2)^{\frac{-(p-\epsilon)}{\epsilon}}+ 2^{p-1}\frac{\mu }{2} LC_F^2\bigg(\frac{p}{C_p}\bigg)^{2/p}\delta^{p+\epsilon}.
\end{equation}
Thus, $(3.13)$ and $(3.16)$ imply that \begin{equation}\label{eq1} \begin{split} \gamma_{k+1}^{\delta} - \gamma_k^{\delta}\leq  \bigg( 2^{p+q-2}  \frac{G_q}{q} \beta_k^q + \beta_k  \frac{(p-1)\epsilon_2^{\frac{p}{p-1}}}{p}\bigg) \| u^{\dagger}-u_0 \|^p +2^{p-1}\frac{\mu }{2} \delta^{\epsilon}\bigg(\frac{C_p}{p}\bigg)^{\frac{2(p-\epsilon)}{p\epsilon}}(LC_F^2)^{\frac{-(p-\epsilon)}{\epsilon}} + \hspace{2mm} \\  \bigg(\frac{\beta_k\epsilon_2^{-p}}{ C_p}     - \beta_k(1+r_k) + 2^{p+q-2}\beta_k^q  \frac{G_q}{q} \frac{p}{C_p}   \bigg) \gamma_k^{\delta}+\mu  \|F(u_k^{\delta})-v^{\delta}\|^{p-1}\delta + 2^{p-1}\frac{\mu }{2} LC_F^2\bigg(\frac{p}{C_p}\bigg)^{2/p}\delta^{p+\epsilon}. \end{split}  
\end{equation} Because of the assumption $u_0 \in B$ and $(2.3)$,   estimate $(3.17)$ can  be rewritten as  \begin{equation*}\label{eq1} \begin{split} \gamma_{k+1}^{\delta} - \gamma_k^{\delta}  \leq \bigg[ \frac{p}{C_p} \bigg( 2^{p+q-2}  \frac{G_q}{q} \beta_k^q + \beta_k  \frac{(p-1)\epsilon_2^{\frac{p}{p-1}}}{p}\bigg)     + \bigg(\frac{\beta_k\epsilon_2^{-p}}{ C_p}     - \beta_k(1+r_k) + 2^{p+q-2}\beta_k^q  \frac{G_q}{q} \frac{p}{C_p}   \bigg) \bigg]\rho^2\\ +\mu  \|F(u_k^{\delta})-v^{\delta}\|^{p-1}\delta +2^{p-1}\frac{\mu }{2} \delta^{\epsilon}\bigg(\frac{C_p}{p}\bigg)^{\frac{2(p-\epsilon)}{p\epsilon}}(LC_F^2)^{\frac{-(p-\epsilon)}{\epsilon}}+ 2^{p-1}\frac{\mu }{2} LC_F^2\bigg(\frac{p}{C_p}\bigg)^{2/p}\delta^{p+\epsilon}\hspace{5mm}\end{split} \end{equation*}

\begin{equation}\label{eq1} \begin{split} = \bigg[   2^{p+q-1} \beta_k^q  \frac{G_q}{q}   \frac{p}{C_p}   + \beta_k \bigg(\frac{(p-1)\epsilon_2^{\frac{p}{p-1}}}{C_p}+\frac{\epsilon_2^{-p}}{C_p}-(1+r_k)\bigg)     \bigg]\rho^2+2^{p-1}\frac{\mu }{2} \delta^{\epsilon}\bigg(\frac{C_p}{p}\bigg)^{\frac{2(p-\epsilon)}{p\epsilon}}(LC_F^2)^{\frac{-(p-\epsilon)}{\epsilon}}\\ +\mu  \|F(u_k^{\delta})-v^{\delta}\|^{p-1}\delta+ 2^{p-1}\frac{\mu }{2} LC_F^2\bigg(\frac{p}{C_p}\bigg)^{2/p}\delta^{p+\epsilon}.\hspace{10mm}\end{split} \end{equation}
Now we know that $p> 1$ which means  either $0<p-1\leq 1$ or $p-1>1$. If $p-1\leq 1$, then employ the estimate $(r+s)^{p-1}\leq r^{p-1}+s^{p-1} \ \text{for}\ r, s \geq 0$, otherwise  estimate $(r+s)^{p-1}\leq 2^{p-2}(r^{p-1}+s^{p-1}) \ \text{for}\ r, s \geq 0$ in $(3.14)$ to obtain (we find a single estimate for both the cases) $$\|F(u_k^{\delta})-v^{\delta}\|^{p-1}\leq \max\{2^{p-2}, 1\}\bigg[\bigg(\frac{C_p}{p}\bigg)^{\frac{2(p-1)}{p\epsilon}}(LC_F^2)^{\frac{-(p-1)}{\epsilon}}+\delta^{p-1}\bigg].$$ Employing this estimate  in $(3.18)$ to get \begin{equation*} \gamma_{k+1}^{\delta} - \gamma_k^{\delta}  \leq \bigg[   2^{p+q-1} \beta_k^q  \frac{G_q}{q}   \frac{p}{C_p}   + \beta_k \bigg(\frac{(p-1)\epsilon_2^{\frac{p}{p-1}}}{C_p}+\frac{\epsilon_2^{-p}}{C_p}-(1+r_k)\bigg)     \bigg]\rho^2+2^{p-1}\frac{\mu }{2} \delta^{\epsilon}\bigg(\frac{C_p}{p}\bigg)^{\frac{2(p-\epsilon)}{p\epsilon}}\times \end{equation*}\begin{equation}(LC_F^2)^{\frac{-(p-\epsilon)}{\epsilon}} + K_1\delta\bigg(\frac{C_p}{p}\bigg)^{\frac{2(p-1)}{p\epsilon}}(LC_F^2)^{\frac{-(p-1)}{\epsilon}}+K_1\delta^p+
 2^{p-1}\frac{\mu }{2} LC_F^2\bigg(\frac{p}{C_p}\bigg)^{2/p}\delta^{p+\epsilon},\hspace{5mm}
\end{equation}where $K_1=\mu \max\{2^{p-2}, 1\}$. Using the stopping rule discussed in $(10)$ of Assumption $3.1$ in $(3.19)$ to obtain \begin{equation}\label{eq1} \begin{split} \gamma_{k+1}^{\delta} - \gamma_k^{\delta}  \leq \bigg[   2^{p+q-1} \beta_k^q  \frac{G_q}{q}   \frac{p}{C_p}   + \beta_k \bigg(\frac{(p-1)\epsilon_2^{\frac{p}{p-1}}}{C_p}+\frac{\epsilon_2^{-p}}{C_p}-(1+r_k)\bigg)     \bigg]\rho^2+K_1\tau^{-p}\beta_k^p\\+K_2\tau^{-\epsilon}\beta_k^{\epsilon} +K_3\tau^{-(p+\epsilon)} \beta_k^{p+\epsilon}+K_4\tau^{-1}\beta_k,\end{split}
\end{equation} where the constants $K_2, K_3$ and $K_4$ are as follows: $$K_2= 2^{p-1}\frac{\mu }{2} \bigg(\frac{C_p}{p}\bigg)^{\frac{2(p-\epsilon)}{p\epsilon}}(K_5)^{\frac{-(p-\epsilon)}{\epsilon}}, \ \ \  K_3= 2^{p-1}\frac{\mu }{2} K_5\bigg(\frac{p}{C_p}\bigg)^{2/p}, \ K_5=LC_F^2,$$ $$\text{and}\ \ K_4= K_1\bigg(\frac{C_p}{p}\bigg)^{\frac{2(p-1)}{p\epsilon}}(K_5)^{\frac{-(p-1)}{\epsilon}}.$$
Observe that under the conditions, $2^{p+q-1} \beta_k^q  \frac{G_q}{q}   \frac{p}{C_p}   + \beta_k \big(\frac{(p-1)\epsilon_2^{\frac{p}{p-1}}}{C_p}+\frac{\epsilon_2^{-p}}{C_p}-(1+r_k)\big)<0$,  $\beta_{\max}$   sufficiently small and $\tau$  sufficiently large, right side of $(3.20)$ can be less than $0$ (see Remark $3.5$). Since $\tau$ can be taken arbitrary large, for right side of $(3.20)$ to be negative, we must have
 \begin{equation}\label{eq1}  \beta_k^{q-1}     <\frac{1}{2^{p+q-1}} \frac{C_p}{p}\frac{q}{G_q}\bigg(1+r_k-\frac{(p-1)\epsilon_2^{\frac{p}{p-1}}}{C_p}-\frac{\epsilon_2^{-p}}{C_p}\bigg).  \end{equation}
Therefore, by taking  $\beta_k$'s sufficiently smaller than the one's satisfying $(3.21)$, we get
  $$ \gamma_{k+1}^{\delta} - \gamma_k^{\delta}  \leq 0 \implies \gamma_{k+1}^{\delta} \leq \gamma_k^{\delta} \leq \rho^2 \implies  u_{k+1}^{\delta} \in B.$$\end{proof}Deducing the negativity of the right side of $(3.20)$ is not an easy task because of the  involvement of so many constants. We will analyze this condition in a better way by computing some of the constants appearing in it for   Banach spaces such as $\mathcal{L}^p(\Sigma)$ (Lebesgue integrable functions), $\ell^p$ spaces for $p\geq 2$  etc. in Remark $3.5$. In the next lemma, we obtain the convergence rates for the iterates of $(2.1)$-$(2.2)$.
\begin{lemma}
Let the assumptions of Lemma  $3.2$ hold. Then, we have the following convergence rates for the  iterates of $(2.1)$-$(2.2)$: 
$$ \Delta_p^{u_0}(u^{\dagger}, u_{k_{\star}}^{\delta})-(1-K_6)\rho^2=O(\delta^{\epsilon}), \ \text{as}\ \delta \to 0.$$
\end{lemma}
\begin{proof}
From $(3.20)$ and $(3.21)$, we get  the estimate $$\gamma_{k+1}^{\delta} \leq \gamma_{k}^{\delta}+  K_1\delta^p+K_2\delta^{\epsilon}+K_3\delta^{p+\epsilon}+K_4\delta -K_6\rho^2 $$with $K_6= -2^{p+q-1} \beta_k^q  \frac{G_q}{q}   \frac{p}{C_p}   - \beta_k \big(\frac{(p-1)\epsilon_2^{\frac{p}{p-1}}}{C_p}+\frac{\epsilon_2^{-p}}{C_p}-(1+r_k)\big) >0$.   Therefore, for $0<\epsilon\leq 1$, we have  
$$ \Delta_p^{u_0}(u^{\dagger}, u_{k_{\star}}^{\delta})-(1-K_6)\rho^2=O(\delta^{\epsilon}), \ \text{as}\ \delta \to 0.$$\end{proof}
 Till now, we have proved the results of Theorem $3.1$ for  noisy data. Now, in the coming lemmas we discuss results for non-noisy data.
\begin{lemma}
Suppose that $\delta=0$ and the assumptions of Lemma $3.2$ are satisfied. Then, all the iterates of $(2.1)$-$(2.2)$ remain in $B$ and converge to the exact solution $u^{\dagger}$. Moreover, iterates satisfy the recurrence relation  \begin{equation*}\label{eq1} \begin{split} \gamma_{k+1}   \leq- K_8\gamma_k^{\frac{2}{1+\epsilon}} + \alpha_k \gamma_k + K_{11} \beta_k,      \end{split}  
\end{equation*}
where $K_8, K_{11}$ are positive constants and $\{\alpha_k\}$ is a sequence converging to $1$.
\end{lemma}
\begin{proof}
From Lemma $3.2$ via $(3.19)$ and $(3.21)$, it is easy to see that all the iterates of $(2.1)$-$(2.2)$ remain in $B$, sequence $\{\gamma_k\}$ is  monotonically decreasing and bounded below by $0$ for $\delta=0$, where $\Delta_p^{u_0}(u^{\dagger}, u_{k})= \gamma_k$ for each $k$. This means that the limit of  the sequence $\{\gamma_k\}$ exists. Let  $\underset{k \to \infty}{\lim} \gamma_k = a$. 
We show that the  sequence $\{\gamma_k\}$ converges to $0$. Putting $(3.16)$ with $\delta=0$ into  $(3.12)$ yields \begin{equation*}\label{eq1} \begin{split} \gamma_{k+1} - \gamma_k  \leq \bigg( 2^{q-1} \frac{G_q}{q}  \mu^q \hat{L}^q - \frac{\mu}{2}\bigg)\|  F(u_k)- v\|^{p} + \bigg( 2^{p+q-2}  \frac{G_q}{q} \beta_k^q + \beta_k  \frac{(p-1)\epsilon_2^{\frac{p}{p-1}}}{p}\bigg) \| u^{\dagger}-u_0 \|^p  \\     \hspace{-20mm} + \bigg(\frac{\beta_k\epsilon_2^{-p}}{ C_p}     - (1+r_k) \beta_k + 2^{p+q-2}\beta_k^q \frac{G_q}{q} \frac{p}{C_p}   \bigg) \gamma_k. \end{split}  
\end{equation*} We rewrite the above equation as \begin{equation}\label{eq1} \begin{split} \gamma_{k+1} - \gamma_k  \leq -K_7\|  F(u_k)- v\|^{p} + \bigg( 2^{p+q-2}  \frac{G_q}{q} \beta_k^q + \beta_k  \frac{(p-1)\epsilon_2^{\frac{p}{p-1}}}{p}\bigg) \| u^{\dagger}-u_0 \|^p \hspace{15mm} \\     \hspace{-20mm} + \bigg(\frac{\beta_k\epsilon_2^{-p}}{ C_p}     - (1+r_k)\beta_k + 2^{p+q-2}\beta_k^q \frac{G_q}{q} \frac{p}{C_p}   \bigg) \gamma_k, \end{split}  
\end{equation} where $K_7 = - 2^{q-1} \frac{G_q}{q}  \mu^q \hat{L}^q + \frac{\mu}{2} > 0$  because of $(3.3)$.  Taking limit $k \to \infty$ and then incorporating $\sum_{k }\beta_k < \infty 
$ and $(3.2)$ in $(3.22)$, we get \begin{equation}\label{eq1} \begin{split} a-a  \leq -K_7 \underset{k \to \infty}{\lim} \|  F(u_k)- v\|^{p} + 0 \leq -\frac{K_7}{(C_F)^{\frac{2p}{1+\epsilon}}} \underset{k \to \infty}{\lim} \gamma_k^{\frac{2}{1+\epsilon}} = -K_8 \underset{k \to \infty}{\lim} \gamma_k^{\frac{2}{1+\epsilon}}, \end{split}  
\end{equation} where $K_8 = \frac{K_7}{(C_F)^{\frac{2p}{1+\epsilon}}}$ is a  positive constant. Now, using the continuity of the function $x \to x^{a}$ for any $a > 1$, $(3.23)$ implies that \begin{equation*}\label{eq1} \begin{split} 0  \leq -K_8a^{\frac{2}{1+\epsilon}} \implies a^{\frac{2}{1+\epsilon}} \leq 0.  \end{split}  
\end{equation*} But as $\gamma_k \geq 0$,  we must have $a \geq 0$ and thus above implies that $a = 0$. Hence, by $(4)$ in Theorem $2.2$, $u_k \to u^{\dagger}$, i.e. iterates of $(2.1)$-$(2.2)$ converges to the exact solution for non-noisy data. 
Next, we find  the recursion formula satisfied by the sequence   $\{\gamma_k\}$. 
Using $(3.2)$ and $u_0 \in B$ in $(3.22)$ to reach at \begin{equation*}\label{eq1} \begin{split} \gamma_{k+1}  \leq -K_8\gamma_k^{\frac{2}{1+\epsilon}} + \bigg( 2^{p+q-2}  \frac{G_q}{q} \beta_k^q + \beta_k  \frac{(p-1)\epsilon_2^{\frac{p}{p-1}}}{p}\bigg) \frac{p}{C_p}\rho^2+\hspace{40mm}  \\      \bigg(1+ \frac{\beta_k\epsilon_2^{-p}}{ C_p}     - (1+r_k)\beta_k + 2^{p+q-2}\beta_k^q \frac{G_q}{q} \frac{p}{C_p}   \bigg) \gamma_k \end{split}  
\end{equation*} \begin{equation}\label{eq1} \begin{split} = -K_8\gamma_k^{\frac{2}{1+\epsilon}} + \alpha_k \gamma_k + K_9 \beta_k^q + K_{10}\beta_k  \leq- K_8\gamma_k^{\frac{2}{1+\epsilon}} + \alpha_k \gamma_k + K_{11} \beta_k,  \end{split}  
\end{equation} where constants $K_9, K_{10}$ and $\alpha_k$ are as follows: $$K_9 = 2^{p+q-2}  \frac{G_q}{q} \frac{p}{C_p}\rho^2,\quad K_{10} =  \frac{(p-1)\epsilon_2^{\frac{p}{p-1}}}{C_p}\rho^2, \quad \alpha_k = 1+ \frac{\beta_k\epsilon_2^{-p}}{ C_p}     - \beta_k(1+r_k) + 2^{p+q-2}\beta_k^q \frac{G_q}{q} \frac{p}{C_p},$$ and the last term in $(3.24)$ is written because $\beta_k < 1$ and $q>1$,  where $K_{11} = K_9+K_{10}$.  We can easily see that $\alpha_k \to 1$. So, $(3.24)$ is the required recurrence relation. 
\end{proof}
In the next lemma, we obtain the convergence rates for noise-free iterates in terms of radius $\rho$ of $B$.
\begin{lemma}
Let the assumptions of Lemma $3.4$ hold and there exists  a constant $C\geq 0$ such that $\beta_k\leq C\gamma_k$ for each $k$. Then, for $\epsilon\in (0, 1)$ we have the following convergence rate \begin{equation*}\label{eq1} \begin{split}\Delta_p^{u_0}(u^{\dagger}, u_k) \leq   \bigg(\big( g_k \rho^2\big)^{-\frac{1-\epsilon}{1+\epsilon}}  + h_k \bigg)^{-\frac{1+\epsilon}{1-\epsilon}}, \quad k =  1, 2, \ldots    \end{split}  
\end{equation*}
For $\epsilon=1$, we have \begin{equation*}\label{eq1} \begin{split} \Delta_p^{u_0}(u^{\dagger}, u_k)    \leq \prod_{i=0}^{k-1}(- K_8 + \alpha_i + K_{11}C) \rho^2, \quad k =   1, 2,  \ldots    \end{split}  
\end{equation*} $($see proof for the meaning of constants $g_k, h_k$ for $k\geq 1)$.
\end{lemma}

\begin{proof}
With the given condition $\beta_k \leq C \gamma_k$,  $(3.24)$ can be written as  \begin{equation}\label{eq1} \begin{split} \gamma_{k+1}  \leq   -K_8\gamma_k^{\frac{2}{1+\epsilon}} + \alpha_k \gamma_k +  K_{11} \beta_k  \leq- K_8\gamma_k^{\frac{2}{1+\epsilon}} + d_k \gamma_k \\ = d_k \gamma_k\bigg( 1- e_k\gamma_k^{\frac{1-\epsilon}{1+\epsilon}}  \bigg),\end{split}  
\end{equation} where  $d_k = \alpha_k +CK_{11}$ and $e_k = \frac{K_8}{d_k}$ for every $k$. Let $t = \frac{1-\epsilon}{1+\epsilon}$.  Then,  $(3.25)$ yields \begin{equation}\label{eq1} \begin{split} (\gamma_{k+1})^{-t}  \geq   ( d_k \gamma_k)^{-t} \big( 1- e_k\gamma_k^{t}  \big)^{-t}. \end{split}  
\end{equation}  
Applying the estimate $(1-y)^{-t} \geq 1+ ty, \ \forall   y \in (0, 1) $ into $(3.26)$ for $k \geq 0$, we get\begin{equation*}\label{eq1} \begin{split} (\gamma_{k+1})^{-t}  \geq   ( d_k \gamma_k)^{-t}  + f_k,    \end{split}  
\end{equation*} where $f_k= t e_k d_k ^{-t}$. Thus, we have\begin{equation*}\label{eq1} \begin{split}\Delta_p^{u_0}(u^{\dagger}, u_k) \leq   \bigg(\big( g_k \rho^2\big)^{-\frac{1-\epsilon}{1+\epsilon}}  + h_k \bigg)^{-\frac{1+\epsilon}{1-\epsilon}}, \quad k =  1, 2, \ldots    \end{split}  
\end{equation*} where \begin{equation*}
g_k = \prod_{i=0}^{k-1}d_i, \ \text{for} \  k \geq 1,\end{equation*} and \begin{equation*} h_k = \sum_{j=1}^{k-1}\bigg(d_jd_{j+1}\ldots d_{k-1}\bigg)^{-\frac{1+\epsilon}{1-\epsilon}}f_{j-1} + f_{k-1}, \ k \geq 2, \  h_1 = f_0. 
\end{equation*}
For $\epsilon = 1$, $(3.25)$ with $\beta_k \leq C\gamma_k$ implies that
\begin{equation*}\label{eq1} \begin{split} \gamma_{k}    \leq \prod_{i=0}^{k-1}(- K_8 + \alpha_i + K_{11}C) \rho^2, \quad k =   1, 2,  \ldots    \end{split}  
\end{equation*}
So, we get the convergence rates via in terms of radius $\rho$ of $B$.
\end{proof} 
\begin{remark}
 The condition $\beta_k \leq C\gamma_k$ assumed in Lemma $3.5$ is an abstract smoothness condition for obtaining the convergence rates and is similar to other smoothness concepts $($e.g.  source conditions, variational inequalities$)$ already available in the literature $[4, 16, 26, 27, 33]$ in the sense that all these incorporate some a-priori knowledge  of the exact solution.
\end{remark}
For the noise free iterates of $(2.1)$-$(2.2)$, we also obtain convergence rates in terms of $\beta_k$'s where $\beta_k$ satisfy $(2.2)$ for each $k$.
\begin{lemma}
In addition to the rates obtained in Lemma $3.5$, we  also obtain the rates 
 \begin{equation*} \Delta_p^{u_0}(u^{\dagger}, u_k) =  O(\beta_k^{q-1}) \quad \text{as} \  k \to \infty,
\end{equation*} provided the assumptions of Lemma $3.4$ hold and \begin{equation*}     K_{12} +  \eta \beta_k^{-1}\bigg[ \alpha_k      -  \bigg( \frac{\beta_{k+1}}{\beta_{k}}\bigg)^{q-1}\bigg] \leq 0, 
\end{equation*} where $\eta, K_{12}$ are positive constants and $\{\alpha_k\}$ is a sequence converging to $1$.
\end{lemma}
\begin{proof}
 The inequality $(3.24)$ leads to the estimate \begin{equation}  \gamma_{k+1}  \leq   -K_8\gamma_k^{\frac{2}{1+\epsilon}} + \alpha_k \gamma_k +  K_{12} \beta_k^{q}, 
\end{equation} where $K_{12}$ is such that $ K_{12} \beta_k^{q} > K_{11}\beta_k$ for every $k$ (such a condition is possible as $\beta_{\max}<\infty$).
Now, let us define $\eta_k = \dfrac{\gamma_k}{\beta_k^{q-1}}$. Then from $(3.27)$, we have \begin{equation*}\eta_{k+1} \leq  \bigg( \frac{\beta_k}{\beta_{k+1}}\bigg)^{q-1} \bigg[-K_8 \eta_k^{1+t} \beta_k^{(q-1)t}  + \alpha_k \eta_k  + K_{12}  \beta_k    \bigg] \leq \bigg( \frac{\beta_k}{\beta_{k+1}}\bigg)^{q-1} \big[ \alpha_k \eta_k  + K_{12}  \beta_k    \big],  
\end{equation*} where $t = \frac{1-\epsilon}{1+\epsilon}$. For the uniform boundedness of $\{\eta_k\}$ by some $\eta$, sufficient condition is    \begin{equation*}  \alpha_k \eta  + K_{12} \beta_k    \leq \eta \bigg( \frac{\beta_{k+1}}{\beta_{k}}\bigg)^{q-1}\implies     K_{12} +  \eta \beta_k^{-1}\bigg[ \alpha_k      -  \bigg( \frac{\beta_{k+1}}{\beta_{k}}\bigg)^{q-1}\bigg] \leq 0.   \end{equation*}   Thus, we have $ \Delta_p^{u_0}(u^{\dagger}, u_k) = \gamma_k =  O(\beta_k^{q-1}) \  \text{as} \  k \to \infty.$\end{proof}
 On combining Lemmata $3.1$-$3.6$, one can see that proof of the Theorem $3.1$ is complete. Observe that, for $0<\epsilon<1$, in the case of non-noisy data we have obtained the sub-linear convergence rates in  Lemma $3.5$  and as $\epsilon\to 1$, speed of the convergence increases because it switches  to the linear convergence. Further, in Lemma $3.6$ we have obtained the rates in terms of $\beta_i$'s and rates are sub-linear or super-linear accordingly as $1<q<2$ or $q>2$ respectively. 

For proving Theorem $3.1$ (especially Lemma $3.2$), we require the condition $2^{p+q-1} \beta_k^q  \frac{G_q}{q}   \frac{p}{C_p}   + \beta_k \big(\frac{(p-1)\epsilon_2^{\frac{p}{p-1}}}{C_p}+\frac{\epsilon_2^{-p}}{C_p}-(1+r_k)\big)<0$. We discuss about this condition in the following remark.
\begin{remark}
For $(3.21)$ to be satisfied, we must have the following: \begin{enumerate}[(i)]
\item $2^{p+q-1} \beta_k^{q-1}  \frac{G_q}{q}   \frac{p}{C_p}   < 1+r_k -\frac{(p-1)\epsilon_2^{\frac{p}{p-1}}}{C_p}-\frac{\epsilon_2^{-p}}{C_p}$, for each $k$.
\item $\frac{(p-1)\epsilon_2^{\frac{p}{p-1}}}{C_p}+\frac{\epsilon_2^{-p}}{C_p}<1+r_k$, for each $k$.
\end{enumerate}
Observe that $(i)$ can be easily handled by taking $\beta_{\max}$ sufficiently small provided $(ii)$ holds. For $(ii)$, first, we find the values of $C_p$ for different $p$.
Using $(2.7)$, we get the following table $($values of $K_p$  can be obtained by writing a simple program in MATLAB, C${++}$ etc.$)$:\vspace{2mm} \begin{center}
    \begin{tabular}{ | l | l | }
    \hline
   Value of $p$ & Value of $K_p$ $($approx.$)$  \\ \hline
    $1.5$ & $3.8132$  \\ \hline
    $2$ & $5.2086$  \\ \hline
     $3$ & $7.3326$  \\ \hline
    $4$ & $8.9576$  \\
    \hline  $5$ & $10.2274$\\ \hline  $\cdots$ & $\cdots$\\ \hline  $10$ & $13.4980$\\ \hline
    \end{tabular}\vspace{0mm}
    \captionof{table}{Relationship between $p$ and $K_p$}\label{Relationship between $p$ and $K_p$}
\end{center}
From Remark $2.4$, we know that  $C_p=\frac{YK_p}{2^p}$, where the constant $Y$ is same as appearing in the Definition $2.4$. Now, if $U=\mathcal{L}^p(\Sigma)$,  where $p\geq 2$ and $\Sigma\subset \mathbb{R}^n$ is an open domain, then from Example $2.2$ we know that $\delta_U(\epsilon)=\epsilon^p$ for any $\epsilon\in (0, 2]$. In other words, $U$ is $p$ convex for any $p\geq 2$ with $Y=1$. Therefore,  $$C_p=\frac{K_p}{2^p},\quad\text{for}\ \ U=\mathcal{L}^p(\Sigma).$$
So, $(ii)$ holds provided $$(p-1)\epsilon_2^{\frac{p}{p-1}}+ \epsilon_2^{-p}<\frac{(1+r_k)K_p}{2^p}.$$
For instance, take $p=2$ and $\epsilon_2=1$. Then the last inequality becomes $$ \epsilon_2^{2}+ \epsilon_2^{-2}= 2<\frac{(1+1)(5.2086)}{4},$$ which is true since $r_k=1$ for each $k$ $($see $(6)$ of Assumption $3.1)$. This means our assumption of Lemma $3.2$ related to $\beta_k$ is satisfied. 
 The  rationale behind introducing $\epsilon_2$ in $(3.10)$ is  to make the estimate $(ii)$ more flexible so that it  holds for a range of values of $r_k$ or $\epsilon_2$. In other words one can see that $(ii)$ also holds for $p=2$ and $\epsilon_2=0.9$. Further, from $[38, \ \text{Equation}\ 1.2]$ we can see that  the Banach spaces $\ell^p, W_m^p$ $($Sobolev space$)$ are $\max\{2, p\}$ convex and one can verify the condition $(ii)$   provided exact  bound for $Y$ is known as in the case of $\mathcal{L}^p$ spaces. 
\end{remark}

 Next result is for the crucial case when $\epsilon = 0$ in $(3.2)$.  We need to do this case separately as we can not take $\epsilon=0$ in the proof of Theorem $3.1$ (see $(3.16)$, which contains a term having $\epsilon$ in the denominator).  For the case $\epsilon=0$, we need to have a different bound on $\mu$ than what is assumed in Theorem $3.1$ ($(8)$ of Assumption $3.1$).
\begin{theorem}
Let $F$ be a non-linear operator between the Banach spaces $U$, $V$ and the operator equation $F(u) = v, v \in V$, has  a solution $u^{\dagger}$. Let the conditions $(1)$-$(7)$, $(10)$  of Assumption $3.1$ hold with $\epsilon = 0$ in $(5)$, and  $\mu$ satisfies\begin{equation} \mu^{q-1} < 
\frac{q}{2^{q-1}G_q \hat{L}^q} \bigg[ 1-  \frac{1}{2} LC_F^2\bigg(\frac{p}{C_p}\bigg)^{2/p} \bigg].  
\end{equation}  Suppose that  $v^{\delta}\in V$ be such that $\|v^{\delta}-v\|\leq \delta$. 
 Then all the iterates $u_{k+1}^{\delta}$  of iteratively regularized Landweber iteration  method $(2.1)$-$(2.2)$ remain in $B$ for all $k\leq k_{\star}(\delta)-1$ provided  $(3.21)$ holds. Moreover, iterates satisfy the following recurrence relation
$$\gamma_{k+1}^{\delta} \leq \gamma_k^{\delta}+ M_1\delta+M_2\delta^{p}-K_6\rho^2,$$  and we obtain the following convergence rate $$\ \Delta_p^{u_0}(u^{\dagger}, u_{k_{\star}})-(1-K_6)\rho^2=O(\delta), \ \text{as}\ \delta \to 0,$$  for some constants $M_i, 1\leq i\leq 2$, where $\gamma_k = \Delta_p^{u_0}(u^{\dagger}, u_k^{\delta})$ and constant $K_6$ has same meaning as in Theorem $3.1$. 
\end{theorem}

\begin{proof}
It can be observed  that Lemma $3.1$ is valid with our assumptions. So, put $\epsilon = 0$ in  $(3.12)$  and employ the condition $(3.28)$ in it to obtain
\begin{equation}\label{eq1} \begin{split}   \gamma_{k+1}^{\delta} - \gamma_k^{\delta}
    \leq \mu \delta\|F(u_k^{\delta})-v^{\delta}\|^{p-1}   + \bigg( 2^{p+q-2}  \frac{G_q}{q} \beta_k^q + \beta_k  \frac{(p-1)\epsilon_2^{\frac{p}{p-1}}}{p}\bigg) \| u^{\dagger}-u_0 \|^p   \\ +   \bigg(\frac{\beta_k\epsilon_2^{-p}}{ C_p}     - (1+r_k)\beta_k + 2^{p+q-2}\beta_k^q \frac{G_q}{q} \frac{p}{C_p}   \bigg) \gamma_k^{\delta}. \end{split}  
\end{equation} 
Let us assume that $u_k^{\delta}\in B$, then $(3.29)$ with $(2.3)$ leads to 
\begin{equation}\label{eq1} \begin{split}   \gamma_{k+1}^{\delta} - \gamma_k^{\delta}
    \leq \mu \delta\|F(u_k^{\delta})-v^{\delta}\|^{p-1}   + \bigg[ 2^{p+q-1} \beta_k^q \frac{G_q}{q}\frac{p}{C_p}    + \beta_k\bigg( \frac{(p-1)\epsilon_2^{\frac{p}{p-1}}}{C_p}+\frac{\epsilon_2^{-p}}{ C_p}-(1+r_k) \bigg)\bigg]\rho^2. \end{split}  
\end{equation} 
From $(3.14)$ we know that \begin{equation*}  \|F(u_k^{\delta})-v^{\delta}\|  \leq \hat{L} \bigg(\frac{p}{C_p}\bigg)^{1/p}\rho^{\frac{2}{p}}+{\delta}.
\end{equation*}As $p-1$ is either $\leq 1$ or $>1$,  above with the estimate $$(r_1+s_1)^{\lambda}\leq 2^{\lambda-1}(r_1^{\lambda}+s_1^{\lambda})\ \text{for}\ r_1, s_1\geq 0\ \text{and}\ \lambda\geq 1,$$ or $$(r_1+s_1)^{\lambda}\leq r_1^{\lambda}+s_1^{\lambda}\ \text{for}\ r_1, s_1\geq 0\ \text{and}\ 0\leq \lambda\leq 1,$$ accordingly as $p-1\leq 1$ or $p-1\geq 1$ with $(3.30)$ imply that  \begin{equation*}\label{eq1} \begin{split} \gamma_{k+1}^{\delta} - \gamma_k^{\delta}  \leq  \bigg[ 2^{p+q-1} \beta_k^q \frac{G_q}{q}\frac{p}{C_p}    + \beta_k\bigg(\frac{(p-1)\epsilon_2^{\frac{p}{p-1}}}{C_p}+ \frac{\epsilon_2^{-p}}{ C_p}-(1+r_k) \bigg)\bigg]\rho^2+K_1\mu\delta^p\\  +K_1\mu \delta \hat{L}^{p-1}\bigg(\frac{p}{C_p}\bigg)^{\frac{p-1}{p}}\rho^{\frac{2(p-1)}{p}}, \end{split} \end{equation*}where $K_1=\max\{1, 2^{p-2}\}$. Use  the stopping rule discussed in $(10)$ of Assumption $3.1$ in above to obtain 
\begin{equation*}\label{eq1} \begin{split} \gamma_{k+1}^{\delta} - \gamma_k^{\delta}  \leq  \bigg[ 2^{p+q-1} \beta_k^q \frac{G_q}{q}\frac{p}{C_p}    + \beta_k\bigg( \frac{(p-1)\epsilon_2^{\frac{p}{p-1}}}{C_p}+\frac{\epsilon_2^{-p}}{ C_p}-(1+r_k) \bigg)\bigg]\rho^2  +M_1\delta+ M_2\delta^p\hspace{10mm} \end{split} \end{equation*} \begin{equation*}\label{eq1} \begin{split}  \leq  \bigg[ 2^{p+q-1} \beta_k^q \frac{G_q}{q}\frac{p}{C_p}    + \beta_k\bigg( \frac{(p-1)\epsilon_2^{\frac{p}{p-1}}}{C_p}+ \frac{\epsilon_2^{-p}}{ C_p}-(1+r_k) \bigg)\bigg]\rho^2   +M_1 \beta_k\tau^{-1}+ M_2\beta_k^p\tau^{-p}, \end{split} \end{equation*} where $M_1= K_1\mu  \hat{L}^{p-1}\big(\frac{p}{C_p}\big)^{\frac{p-1}{p}}\rho^{\frac{2(p-1)}{p}}$ and $M_2= K_1\mu$. Now if  $(3.21)$ holds and $\tau$ is sufficiently large, then we can see that $u_{k+1}^{\delta}\in B$ as in Lemma $3.2$. Rest part of the proof follows on the lines of Lemma $3.3$.
\end{proof}
\begin{remark}
In Theorem $3.2$, we have obtained the convergence rates only for noisy data. However, convergence  rates can also be obtained for non-noisy data, in the case when $\epsilon=0$ in $(3.2)$, exactly on the lines of Lemmata $3.4$-$3.6$ as in Theorem $3.1$.
\end{remark}
In the following remark, we discuss about the special case  when $\{\beta_k\}=\{0\}$. Observe that in this case, $(2.1)$-$(2.2)$ reduces to Landweber iteration method (non-noisy version) as discussed in Remark $2.3$.
\begin{remark}
\begin{enumerate}[(i)]
\item If $\beta_k=0$ for each $k$, then one can reproduce Lemmata $3.1$, $3.2$, $3.3$ to obtain the convergence rates for Landweber iteration method in the case of noisy data which are missing from $[24]$.
\item If $\beta_k=0$ for each $k$ and $\delta=0$, then observe that the right hand side of $(3.19)$ is trivially satisfied and $\alpha_k=0$ $($see Lemma $3.4)$. Also, take $C=0$ in Lemma $3.5$ which means $d_k=1$, $e_k=K_8$ and $f_k=tK_8$ for each $k$. Therefore, one can see that the rates obtained in Lemma $3.5$ and the rates obtained in $[24]$  for Landweber iteration method are of the same order. Hence, we can say that the results of $[24]$  can be deduced from our results in a special case.

\end{enumerate}
\end{remark}

\section{Electrical Impedance Tomography (EIT)}In this section, we discuss an example  related to  Calder\'on's inverse problem which satisfies the H\"older stability estimate $(3.2)$ under some assumptions on the electrical conductivity.  
This problem has been also considered in $[24]$ to obtain the convergence and convergence rates of non-linear Landweber iteration scheme in Banach spaces.  
 Our results on covergence can be applied  on the Calder\'on's inverse problem which is the mathematical bedrock of  EIT.     It is well known that this Calder\'on's inverse problem is severely ill-posed $[11]$. Ulhmann, in $[14]$ has recently studied the EIT and Calder\'on's problem and further, we refer to $[19, 20, 28, 29]$  for some of the literature in this context. In $[6, 11]$, two results  on the Lipschitz-type stability estimates have been obtained for the Calder\'on's inverse conductivity problem provided  the a-priori information about the conductivity is known, i.e. it is piecewise constant with a bounded number of unknown values. The difference between these two results is that, in $[11]$  a real valued case is discussed whereas a complex valued case is discussed in $[6]$.   In our work, we consider the real valued case which involves the  determination of $v \in H^1(\Omega)$ where $v$ satisfies \begin{equation} 
 \begin{cases} \text{div}(\gamma \nabla v) = 0, \ \  \text{in} \  \Omega \\ \ \   v = g, \qquad   \ \quad \text{on} \ \partial \Omega.\end{cases} \end{equation} Here $g \in H^{1/2}(\partial \Omega)$,  $\Omega \subset \mathbb{R}^n, n \geq 2$ is a bounded domain having smooth boundary and $\gamma$ is the positive and bounded function representing the electrical conductivity of $\Omega$. If $\gamma$ is a complex valued function in $(4.1)$, then on subjecting to Dirichlet boundary conditions, $(4.1)$ also appears as the asymptotic limit of an
elliptic equation with memory in the study of electrical conduction in biological tissues $[6]$. Further, Calder\'on's inverse  problem has many applications, for instance, in the fields of nondestructive testing of materials, medical imaging, and therefore, it is an important problem to study.

  The inverse problem associated with 
EIT is the determination of electrical conductivity $\gamma$  from the information of $\Lambda_{\gamma}$, i.e.  the Dirichlet to Neumann map which is defined as $$\Lambda_{\gamma}: H^{1/2}(\partial \Omega) \to H^{-1/2}(\partial \Omega): \ g \to \bigg(\gamma \frac{\partial v}{\partial \nu}\bigg)\bigg|_{\partial \Omega},$$ where the vector $\nu$ is the outward normal to $\partial \Omega$. The  operator $F$ associated with the inverse problem is defined by \begin{equation}
 F : U \subset \mathcal{L}_{+}^{\infty}(\Omega) \to \mathbb{L}(H^{1/2}(\partial \Omega), H^{-1/2}(\partial \Omega)): F(\gamma) = \Lambda_{\gamma},\end{equation} where $\mathbb{L}(H^{1/2}(\partial \Omega), H^{-1/2}(\partial \Omega))$ is the space of all bounded linear operators from $H^{1/2}(\partial \Omega)$ to  $H^{-1/2}(\partial \Omega)$.
 Further,   $F'$, the Fr\'echet derivative of $F$  at $\gamma = \gamma'$ is given by \begin{equation*}
 F'(\gamma'): U \subset \mathcal{L}^{\infty}(\Omega) \to \mathbb{L}(H^{1/2}(\partial \Omega), H^{-1/2}(\partial \Omega)): \delta \gamma \to \ F'(\gamma')(\delta \gamma), 
 \end{equation*} where  $F'(\gamma')(\delta \gamma)$ is defined by the sesquilinear form \begin{equation*}
 \langle F'(\gamma')(\delta \gamma) g_1, \ g_2  \rangle = \int_{\Omega} \delta \gamma \nabla v_1 \cdot \nabla v_2 dx, \quad g_1, g_2 \in H^{1/2}(\partial \Omega),\end{equation*} where $v_1$ and $v_2$ are the weak solutions of $$\begin{cases}
 \text{div} (\gamma' \nabla v_1) = 0 = \text{div}  (\gamma' \nabla v_2), \quad \text{in} \  \Omega \\ v_1 = g_1, \ \ v_2 = g_2 \qquad \quad \quad \qquad \text{on} \ \partial \Omega. \end{cases}$$ Under the assumption that $\gamma \in L^{\infty}(\Omega)$, for the case $n = 2$, uniqueness of the solution to the inverse problem $(4.2)$ is discussed in $[20]$ and for $n \geq 3$, it is considered in $[22]$ provided $\gamma$ is in the Sobolev space $W^{3/2, \infty}(\Omega)$.

Remaining discussion of this section is mainly based on the results of $[11]$. So, we refer this article whenever needed instead of recalling all the results. Next theorem presents the  Lipschitz estimate  established in $[11]$.
\begin{theorem}
Let $\gamma_1, \gamma_2$ be two real piecewise constant  functions  such that $$\gamma_i(x)=\sum_{j=1}^N \gamma_j^i(x)\chi_{D_j}(x),\ x\in \Omega,\ \lambda\leq \gamma_i(x)\leq \lambda,\ i=1, 2,$$ where $\lambda\in (0, 1]$, $\gamma_j^i$ is an   unknown real number for each $i, j$, $D_j$'s are known open sets, $\chi_{D_j}$ is characteristics function of the set $D_j$  and $N\in \mathbb{N}$. Then under some assumptions on $\Omega$, $D_j$'s $($see section $2.2$ in $[11])$,  we have \begin{equation*}
\|\gamma_1-\gamma_2\|_{L^{\infty}(\Omega)} \leq C \|\Lambda_{\gamma_1}-\Lambda_{\gamma_2}\|_{L(H^{1/2}(\partial \Omega), H^{-1/2}(\partial \Omega))},
\end{equation*} where $C$ is a constant. 
\end{theorem}

Now, we verify that the assumptions of Theorem $3.1$ are satisfied. Before that, observe that the Banach space $L^{\infty}(\Omega)$ is not a uniformly convex space, so defining the space $U$ in accodance with  Theorem $4.1$ as\begin{equation*}
 U= \text{span} \{\chi_{D_1}, \chi_{D_2}, \ldots, \chi_{D_N}\}\end{equation*} fitted with $L^p$ norm where $p > 1$, $D_i$'s, $\chi_{D_i}'s$ are same as in Theorem $4.1$. Then,  with the help of basis $\{\chi_{D_1}, \chi_{D_2}, \ldots, \chi_{D_N}\}$, one can show the Lipschitz continuity of  $F'$ and its boundedness $[24, \ \text{subsection}\ 5.3]$, i.e.  $ (2), (3)$  in Assumption $3.1$.

Further, assume that $v = F(\gamma^{\dagger})$ where $\gamma^{\dagger} \in U$. Then $(5)$ of Assumption $3.1$ holds (see Theorem $4.1$). As the notion of weak and strong topology is equivalent for the finite dimensional spaces, $F$ defined in $(4.2)$ is weakly sequentially closed which means $(4)$ of Assumption $3.1$ holds. And let $u_0$, $\{\beta_k\}$, $\mu$, $\rho^2$ and $k_*$ are chosen in accordance with Theorem $3.1$,  then iteratively regularized  Landweber iteration method converges in accordance with  Theorem $3.1$  and we also get   the said convergence rates. 
\begin{remark}
It is of possible impression that the inverse problem $(4.2)$ becomes well posed by considering the unknown conductivities in a finite dimensional space in Theorem $4.1$. However, a counter example to discomfort such an impression is  discussed in $[11]$. We recall that example in our work for the sake of completeness. Let $F:\mathbb{R}\to\mathbb{R}^3$ be such that $$F(t)= \big((2+\cos 3\pi \alpha t)\cos  2\pi t, (2+\cos 3\pi \alpha t)\sin  2\pi t, \sin  2\pi\alpha t\big), \ t\in \mathbb{R},$$ where $\alpha$ is a parameter. It can be shown that $F$   is smoothly locally invertible. For $\alpha$ rational, $F$ is periodic whereas for $\alpha$ irrational, it is globally one-to-one but $F^{-1}$ is discontinuous at every point. Further, if $\alpha$ is irrational and $F$ is restricted to interval $[-I, I]$ for some $I>0$, then $F^{-1}$ is globally Lipschitz, but  Lipschitz constant may blow up as $\alpha$ tends to any rational number.
\end{remark}
\section{Conclusion}
We have implemented the iteratively regularized  Landweber iteration scheme  for  non-linear inverse problems in Banach spaces to obtain the  convergence rates. Under the condition that non-linear operator satisfies H\"older stability estimate, we proved the convergence for non-noisy iterates and obtained the   sublinear convergence rates   under some additional assumptions. To the best of  our knowledge, this paper is the  first advancement to find the explicit reconstructions for Iteratively regularized landweber iteration method by employing  the  H\"older stability estimates  after the reconstructions in $[24]$. An important thing to note is  that while obtaining the convergence rates for iteratively regularized Landweber iteration method $(2.1)$-$(2.2)$, complementary, we also get the convergence rates for Landweber scheme (see Remark $3.7$)  in the presence of noisy data which are missing from the literature.

An important future work in this direction is to come up with a situation where the assumptions of Theorem $3.1$ are satisfied in an infinite dimensional Banach space, for instance, one can think of fitting $L^{\infty}(\Omega)$ in some infinite dimensional space in Example $4.1$.
  Further, one can also think about the application of  results of Theorem $3.1$ in option pricing theory $($OPT$)$, see e.g. $[37]$.
The inverse problem associated with OPT is addressed in $[32]$.

\bibliographystyle{plain}

\end{document}